\DeclareMathAlphabet{\pazocal}{OMS}{zplm}{m}{n}
\theoremstyle{definition}
\newtheorem{definition}{Definition}[section]
\newtheorem{remark}[definition]{Remark}
\newtheorem{example}[definition]{Example}
\theoremstyle{plain}
\newtheorem{lemma}[definition]{Lemma}
\newtheorem{proposition}[definition]{Proposition}
\newtheorem{theorem}[definition]{Theorem}
\newtheorem{corollary}[definition]{Corollary}
\newtheorem{conclu}[definition]{Conclusion}
\def\esc#1{\langle #1\rangle}
\def\l{\lambda}
\def\A{{\mathcal A}_K}
\def\B{{\mathcal B}_K}
\def\R{\mathbb R}
\def\C{\mathbb C}
\newcommand{\Ann}{\mathrm{Ann}}
\tikzset{every picture/.style={line width=0.11mm}}
\subjclass[2020]{17A60, 17D92, 11E04, 15A63}
\keywords{Evolution algebras, (inner) products, quadratic forms, isometries.}
\begin{document}

\title[Evolution algebras with one-dimensional square]{Evolution algebras with one-dimensional square}

\author[Brache, Mart\'in Barquero, Mart\'in Gonz\'alez, S\'anchez-Ortega]
{Chad Brache$^{3}$, Dolores Mart\'in Barquero$^{1}$, C\'andido Mart\'in Gonz\'alez$^{2}$, Juana S\'anchez-Ortega$^{3}$}
\address[1]{Departamento de Matem\'atica Aplicada. Escuela de Ingenier\'\i as Industriales. Universidad de M\'alaga, Campus de Teatinos. 29071 M\'alaga,   Spain.}
\address[2]{Departamento de \'Algebra, Geometr\'ia y Topolog\'ia. Facultad de Ciencias. Universidad de M\'alaga, Campus de Teatinos. 29071 M\'alaga, Spain.}
\address[3]{Department of Mathematics and Applied Mathematics. University of Cape Town \\ Cape Town, South Africa.}
\email{dmartin@uma.es}
\email{candido@apncs.cie.uma.es}
\email{BRCCHA005@myuct.ac.za}
\email{juana.sanchez-ortega@uct.ac.za}

\thanks{ The first  and third authors are supported by the Spanish Ministerio de Ciencia e Innovaci\'on through project  PID2019-104236GB-I00 and  by the Junta de Andaluc\'{\i}a  through projects  FQM-336 and UMA18-FEDERJA-119,  all of them with FEDER funds. The second and fourth authors are supported by the National Research Foundation (NRF) South Africa through the grants no. MND200528525542 and IFR180306315977, respectively. The fourth author is also supported by the Faculty of Science of the University of South Africa via the Launching Grants Programme.
}

\maketitle


\begin{abstract}

Evolution algebras with one dimensional square are classified using the theory of inner product spaces. More precisely, for $A$ an evolution algebra with $\dim(A^2) = 1$ and $a$ a generator of $A^2$, the product of $A$ is given by $xy = \esc{x,y}a$. Three broad classes of algebras are obtained: 
\begin{enumerate}
\item $a\in\Ann(A)$; 
\item $a\notin\Ann(A)$ and $a$ is isotropic relative to $\esc{\cdot, \cdot}$; 
\item $a\notin\Ann(A)$ and $a$ is anisotropic relative to $\esc{\cdot, \cdot}$.
\end{enumerate}
\end{abstract}


\section{Introduction}

Evolution algebras were introduced by Tian in his book \cite{T1} (see also \cite{TV}) to model the self reproduction process in non-Mendelian genetics. As shown in \cite{T1}, the theory of evolution algebras is connected to many areas of Mathematics, like, for example, graph theory, group theory, stochastic processes, mathematical physics, among many others. 

Since their introduction evolution algebras have attracted the attention to several researchers, who were eager to investigate them from an algebraic point of view; see \cite{BCS}-\cite{Imo} and references therein. 

Here, we focus our attention on the evolution algebras whose square has dimension one; we classify them using the theory of inner product spaces and quadratic forms. We begin by introducing a few key ideas: any commutative algebra $A$ (not necessarily finite dimensional) over a field $K$ with $\dim(A^2)=1$ admits an inner product $\esc{\cdot,\cdot}$ such that the product in $A$ is given by $xy=\esc{x,y}a$, for some fixed element $a\in A$ (unique up to scalar multiples). From here, we obtain three (excluding) possibilities for $a$:
\begin{enumerate}
\item\label{one}
$a\in\Ann(A)$, which gives $A^3 = (A^2)^2 = 0$.
\item $a\notin \Ann(A)$ and $\esc{a,a} = 0$, which implies $(A^2)^2 = 0$ but $A^3\ne 0$.
\item $a\notin\Ann(A)$ and $\esc{a,a}\ne 0$, which yields $A^3 \ne 0$ and $(A^2)^2\ne 0$.
\end{enumerate}
Here $\Ann(A) = \{x \in A \, |\, xA = 0\}$. Thus, the algebras we will be dealing with come in three different flavours given by the trichotomy above. In any case, choosing a suplementary subspace $W$ of $\Ann(A)$ we obtain a decomposition $A = \Ann(A)\oplus W$ such that $\esc{\cdot, \cdot}\vert_W$ is nondegenerate. It is worth mentioning that we have quite a lot of flexibility to choose $W$, so depending on the flavour of our algebra we will require $W$ to satisfy certain conditions.

If $A$ is an evolution algebra of flavour \eqref{one}, we will show (in Theorem \ref{Iso1}) that the isomorphic class of $A$ is completely determined by the pair
$(\dim(A),[W])$, where $[W]$ denotes the isometry type of $W$. More precisely, if $A = \Ann(A)\oplus W_A$ and $B = \Ann(B) \oplus W_B$ are like in \eqref{one}, then we will prove that $A \cong B$ if and only if $\dim(A) = \dim(B)$, and $W_A$ and $W_B$ are isometric.

Similar results for flavours (2) and (3) will be explored. To do so, we will make used of the theory of inner products and/or quadratic forms.

The paper is organized as follows: in Section 2, we introduce the required background. Section 3 begins by noticing that our study of the evolution algebras $A$ with $\dim (A^2) = 1$ must be divided into two cases, attending on whether $(A^2)^2 \neq 0$ or $(A^2)^2 = 0$, which are treated in \S 3.1 and \S 3.2, respectively. 


\section{Preliminaries}

Throughout the paper, $V$ denotes a vector space over a field $K$. An {\bf inner product space} is a pair $(V, \esc{\cdot, \cdot})$, where $\esc{\cdot, \cdot}: V \times V \to K$ is a symmetric bilinear form. Two inner product spaces $(V,\esc{\cdot, \cdot})$ and $(V',\esc{\cdot, \cdot}')$ are said to be {\bf isometric} or {\bf equivalent} if there is a (vector space) isomorphism $f: V \to V'$ such that $\esc{f(x),f(y)}'= \esc{x,y}$ for all $x, y \in V$.  

\smallskip 

A map $q: V\to K$ satisfying that 
\begin{enumerate}
\item[\rm (1)] $q(\l v) = \l^2 q(v), \quad \forall \, \, \l \in K, \, \, v \in V$,
\item[\rm (2)] the map $\langle \cdot, \cdot \rangle_q: V \times V \to K$ given by $(x, y) \mapsto q(x + y) - q(x) - q(y)$, and called the {\bf polar form} of $q$, is bilinear,
\end{enumerate}
is called a {\bf quadratic form} on $V$, and $(V, q)$ is said to be a {\bf quadratic space}.


If the characteristic of $K$ is different from 2, the notions of inner product spaces and quadratic spaces are equivalent: the polar form of a quadratic form $q$ is now defined by 
$\langle x, y \rangle_q = \frac{1}{2}\left(q(x + y) - q(x) - q(y)\right)$, and satisfies that $\langle x, x\rangle_q = q(x)$. And conversely, if $(V,\esc{\cdot, \cdot})$ is an inner product space, then $q(x):= \esc{x, x}$ for all $x \in V$, is a quadratic form, whose polar form is precisely $\esc{\cdot, \cdot}$. In such a case, we may write $(V, q)$ to refer to the quadratic space $(V, \langle \cdot, \cdot \rangle)$, and vice versa.

\smallskip 

The {\bf radical} of an inner product space $(V, \langle \cdot, \cdot \rangle$) is the subspace of $V$ given by  
\[
V^\bot = \{x \in V  \, |\, \esc{x,V} = 0\},
\]
and $(V, \esc{\cdot, \cdot})$ is said to be {\bf nondegenerate} if $V^\bot = 0$; a {\bf subspace} $W$ of $V$ is called {\bf nondegenerate} if $(W, \esc{\cdot, \cdot}|_W)$ is nondegenerate. 
Recall that $V$ can be written as
\begin{equation} \label{decomV}
V = V^\bot \oplus W, 
\end{equation}
for $W$ a nondegenerate subspace of $V$.

If $V$ has finite dimension, then the matrix $M_B$ of $\langle \cdot, \cdot \rangle$ with respect to a basis $B$ of $V$ is called the {\bf Gram matrix} of $\esc{\cdot, \cdot}$ with respect to $B$. If $M_B$ and $M_{B'}$ are Gram matrices with respect to bases $B$ and $B'$ of $V$, then $M_B$ and $M_{B'}$ have the same rank; the {\bf rank} of $(V, \esc{\cdot, \cdot})$ is defined as the rank of a Gram matrix $M$ of $\esc{\cdot, \cdot}$, and $(V, \esc{\cdot, \cdot})$ is nondegenerate if and only if $M$ is nonsingular. The {\bf discriminant} of $(V, \esc{\cdot, \cdot})$ is defined as zero if $(V, \esc{\cdot, \cdot})$ is degenerate, and as the coset of $\det(M)$ in the factor group $K^*/{(K^*)}^2$, otherwise. The discriminant of two nondegenerate equivalent inner product spaces coincide. 
 
If the characteristic of $K$ is not 2, then we can consider the associated quadratic form $q$ of $(V, \esc{\cdot, \cdot})$ and define its rank. Similarly, one can define the discriminant of $q$ provided that $(V, \esc{\cdot, \cdot})$ is nondegenerate.

If $V$ has finite dimension $n$, then a real quadratic form $q: V \to \R$ of rank $r$ can be expressed as 
\[
q(x_1, \ldots, x_n) = x_1^2 + \ldots + x_p^2- x_{p+1}^2 - \ldots - x_r^2, 
\]
with respect to a suitable basis of $V$; the {\bf signature} of $q$ is defined as $(p, \, r - p)$.

Recall that (finite dimensional) inner product spaces over algebraically closed fields are classified (up to congruence) by rank; over the reals they are classified according to their rank and signature (see \cite[Theorem 6.8]{BAI}); while over finite fields of odd characteristic
their rank and discriminant constitute a complete set of invariants (see \cite[Theorem 6.9]{BAI}). Over other types of fields many different invariants are available; for instance, over a local field those are the dimension, discriminant and the so-called Hasse invariant (see \cite[p. 39]{Serre}). 
Lastly, diagonalizable inner product spaces over quadratically closed fields are classified by their rank.


\section{The first dichotomy}

An {\bf algebra} $A$ over $K$ is a vector space over $K$ endowed with a bilinear map $A \times A \to A$ written as $(a, b) \mapsto ab$, and called the {\bf product} of $A$. We say that $A$ is an {\bf evolution algebra} if there exists a basis $\{a_i\}_{i \in I}$, called a {\bf natural basis} of the underlying vector space of $A$ such that $a_ia_j = 0$ for all $i \neq j$. 

\begin{remark} \label{product}
Let $A$ be an algebra such that $\dim (A^2) = 1$. Then we can find $0 \neq a \in A$ such that $A^2 = Ka$, and the product of any two elements $x, y \in A$ is given by $xy = \lambda_{xy} a$, where $\lambda_{xy} \in K$ depends linearly on both $x$ and $y$. In other words, the map $\langle \cdot, \cdot \rangle: A \times A \to K$ given by $\langle x, y \rangle = \lambda_{xy}$, for all $x, y \in A$, is an inner product in $A$. Clearly, 
\begin{equation} \label{productA}
xy = \langle x, y \rangle a, \mbox{ for all } \, \, x, y \in A.
\end{equation} 
In this case, notice that \eqref{decomV} becomes 
\begin{equation} \label{decomA}
A = \mathrm{Ann}(A) \oplus W;
\end{equation}	
in other words $\Ann(A) = A^\bot$. 
\end{remark} 

At this point, it is worth mentioning that if $A$ is an evolution algebra, then \eqref{productA} does not depend of the generator $a$ of $A^2$. We prove this fact in a more general way using pointed vector spaces.

\begin{lemma} \label{pointed}
Let $(S, s)$ be a pointed vector space, where $0 \neq s \in S$, and $(U, \langle \cdot, \cdot \rangle)$ a nondegenerate inner product space. Then the direct sum $A_s := S \oplus U$ becomes an algebra with 1-dimensional square under the product
\[
(s_1 + u_1) (s_2 + u_2) = \langle u_1, u_2 \rangle s,
\]
for all $s_1, s_2 \in S$ and $u_1, u_2 \in U$. Moreover, if $s'$ is another nonzero element of $S$, 
then $A_s$ and $A_{s'}$ are isomorphic.
\end{lemma}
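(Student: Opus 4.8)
The plan is to dispose of the structural assertions quickly and then focus on the isomorphism $A_s\cong A_{s'}$. For the first part, the product on $A_s=S\oplus U$ is obtained by composing the projection $S\oplus U\to U$ (in each slot) with the bilinear form $\langle\cdot,\cdot\rangle$ and then multiplying by the fixed vector $s$; it is therefore bilinear, so $A_s$ is an algebra (in fact commutative, since $\langle\cdot,\cdot\rangle$ is symmetric). Every product lies in $Ks$, so $A_s^2\subseteq Ks$; conversely, assuming $U\neq 0$, nondegeneracy of $\langle\cdot,\cdot\rangle$ furnishes $u_1,u_2\in U$ with $\langle u_1,u_2\rangle\neq 0$, whence $A_s^2=Ks$ has dimension one. (If $U=0$ the product vanishes identically, so one tacitly assumes $U\neq 0$.)

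For the isomorphism part, the key observation is that $A_s$ and $A_{s'}$ share the underlying vector space $S\oplus U$ and differ only in the distinguished generator of the square. I would therefore search for an isomorphism of the simple form $\varphi=\psi\oplus\mathrm{id}_U$ with $\psi:S\to S$ a linear automorphism to be chosen. Writing $x_i=s_i+u_i$ with $s_i\in S$, $u_i\in U$, one computes $\varphi(x_1x_2)=\varphi(\langle u_1,u_2\rangle s)=\langle u_1,u_2\rangle\,\psi(s)$ on one side, and $\varphi(x_1)\varphi(x_2)=(\psi(s_1)+u_1)(\psi(s_2)+u_2)=\langle u_1,u_2\rangle\,s'$ (product taken in $A_{s'}$) on the other. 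Since $\langle\cdot,\cdot\rangle$ is not identically zero, these coincide for all $x_1,x_2$ exactly when $\psi(s)=s'$.

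So it remains to exhibit a linear automorphism $\psi$ of $S$ with $\psi(s)=s'$. As $s$ and $s'$ are both nonzero, each extends to a basis of $S$ (invoking Zorn's lemma in the infinite-dimensional case); sending the first basis to the second, with $s\mapsto s'$, defines such a $\psi$. Then $\varphi=\psi\oplus\mathrm{id}_U$ is a vector space isomorphism which, by the computation above, preserves products, hence an algebra isomorphism from $A_s$ onto $A_{s'}$, as desired.

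I do not anticipate a real obstacle here: the only points needing a little care are the tacit assumption $U\neq 0$ (without which the square fails to be one-dimensional) and the existence of $\psi$ in arbitrary dimension, which is just the standard fact that any nonzero vector belongs to some basis; the rest is a routine bilinearity verification.
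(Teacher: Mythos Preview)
Your proof is correct and follows essentially the same route as the paper: both construct the isomorphism as $\psi\oplus\mathrm{id}_U$ for a linear automorphism $\psi$ of $S$ sending $s$ to $s'$. Your write-up is in fact more detailed (the paper merely asserts the first part is straightforward and that the map is the desired isomorphism), and your remark about the tacit hypothesis $U\neq 0$ is well taken.
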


\begin{proof}
It is straightforward to check that $A_s$ is an algebra such that $\dim(A^2_s) = 1$. For $(S, s')$  another pointed vector space, take $\theta: S \to S$ a bijective linear map such that $\theta(s) = s'$. The map
$f:A_s \to A_{s'}$ given by 
$f(s + u) = \theta(s) + u$, for all $s \in S$ and $u \in U$, is the desired isomorphism.
\end{proof} 

\begin{proposition} \label{dicho}
Let $A$ be a commutative algebra with $\dim(A^2) = 1$. Then either $(A^2)^2 = 0$ or there is a unique nonzero idempotent in $A$.
\end{proposition}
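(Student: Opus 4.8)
The plan is to use the representation $xy=\esc{x,y}a$ from Remark \ref{product}, write $a=z+w$ according to the decomposition $A=\Ann(A)\oplus W$ of \eqref{decomA}, and split on whether $w=0$ or $w\neq 0$. First I would compute $(A^2)^2$: since $A^2=Ka$, we have $(A^2)^2=Ka^2=K\esc{a,a}a$, so $(A^2)^2=0$ precisely when $\esc{a,a}=0$, i.e.\ when $a$ is isotropic. Thus it suffices to show that if $\esc{a,a}\neq 0$ then $A$ has a unique nonzero idempotent, and conversely that an evolution (or just commutative) algebra with $\dim(A^2)=1$ possessing a nonzero idempotent must have $\esc{a,a}\neq 0$ — the latter because if $e$ is a nonzero idempotent then $e=e^2\in A^2=Ka$, so $e=\mu a$ for some $\mu\neq 0$, and $e=e^2=\mu^2\esc{a,a}a=\mu\esc{a,a}e$, forcing $\esc{a,a}\neq 0$ (and in fact $\mu=\esc{a,a}^{-1}$).

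For the main direction, assume $\esc{a,a}\neq 0$. I would look for an idempotent of the form $e=\l a$: then $e^2=\l^2\esc{a,a}a$, and $e^2=e$ forces $\l^2\esc{a,a}=\l$, whose nonzero solution is $\l=\esc{a,a}^{-1}$. So $e:=\esc{a,a}^{-1}a$ is a nonzero idempotent. For uniqueness, suppose $f$ is any nonzero idempotent; as noted above $f\in A^2=Ka$, say $f=\mu a$ with $\mu\neq 0$, and the idempotent equation gives $\mu^2\esc{a,a}=\mu$, hence $\mu=\esc{a,a}^{-1}$, so $f=e$.

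I do not anticipate a serious obstacle here: the statement is essentially a direct computation once the product is written in the form \eqref{productA}. The only point requiring a little care is that the proposition is stated for an arbitrary commutative algebra with $\dim(A^2)=1$ (not merely an evolution algebra), but Remark \ref{product} already provides the inner-product description in that generality, so the argument goes through verbatim. One should also note that commutativity (symmetry of $\esc{\cdot,\cdot}$) is used implicitly when invoking Remark \ref{product}, and that the trichotomy between the two subcases is genuinely exclusive because $\esc{a,a}$ either vanishes or does not.
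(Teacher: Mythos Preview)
Your proposal is correct and follows essentially the same approach as the paper: both arguments observe that $a^2=\esc{a,a}a$ (the paper writes this as $a^2=\lambda a$), rescale to obtain the idempotent $e=\esc{a,a}^{-1}a$, and prove uniqueness by noting that any nonzero idempotent lies in $A^2=Ka$. The decomposition $a=z+w$ with $z\in\Ann(A)$, $w\in W$ that you announce in your plan is never actually used and can be dropped; your extra observation that the two alternatives are mutually exclusive is a small bonus not made explicit in the paper's proof.
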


\begin{proof}
From $\dim(A^2) = 1$ we can find $0 \neq a \in A$ such that $A^2 = Ka$. If $(A^2)^2 \neq 0$, then $0 \neq a^2 = \lambda a$, for some $0 \neq \lambda \in K$. We claim that $e := \lambda^{-1}a$ is the unique nonzero idempotent of $A$; in fact:
\[
e^2 = \lambda^{-2}a^2 = \lambda^{-2}(\lambda a) = \lambda^{-1}a = e.
\] 
Clearly, $A^2 = Ke$. If $0 \neq u \in A$ is an idempotent, then $u = u^2 \in A^2 = Ke$, and so $u = \gamma e$ for some $0 \neq \gamma \in K$. But then  
\[
\gamma e = u = u^2 = \gamma^2 e, 
\]
which implies that $\gamma = 1$, and so $u = e$.
\end{proof} 

\subsection{Case $(A^2)^2 \neq 0$} We first study the evolution algebras $A$ whose square is 1-dimensional and satisfy the condition $(A^2)^2 \neq 0$. 

\smallskip 

The following result follows from Remark \ref{product} and Proposition \ref{dicho}.

\begin{proposition} \label{inner}
Let $A$ be a commutative algebra such that $\dim(A^2) = 1$ and $(A^2)^2 \neq 0$. Then there exists a unique inner product $\langle \cdot, \cdot \rangle: A \times A \to K$ such that the product of $A$ is given by 
\[
xy = \langle x, y \rangle e, \, \, \forall \, \, x, y \in A,
\]
where $e$ is the nonzero unique idempotent of $A$. Moreover, $\langle e, e \rangle = 1$. 
\end{proposition}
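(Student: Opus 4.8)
The plan is to derive everything from the two results we may already invoke: Remark~\ref{product}, which produces \emph{an} inner product attached to a choice of generator of $A^2$, and Proposition~\ref{dicho}, which under the hypothesis $(A^2)^2\neq 0$ furnishes a \emph{canonical} generator, namely the unique nonzero idempotent $e$. The point is that once we fix the generator of $A^2$ to be $e$, the inner product coming from Remark~\ref{product} is pinned down, and its normalization $\esc{e,e}=1$ falls out immediately.

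First I would recall from Proposition~\ref{dicho} that $A$ has a unique nonzero idempotent $e$, and that $A^2=Ke$ (this is checked inside the proof of that proposition). Hence $e$ is a legitimate choice of the generator ``$a$'' in Remark~\ref{product}, and Remark~\ref{product} gives a symmetric bilinear form $\esc{\cdot,\cdot}\colon A\times A\to K$ with $xy=\esc{x,y}e$ for all $x,y\in A$. Next I would establish the normalization: since $e^2=e$ and also $e^2=\esc{e,e}e$ with $e\neq 0$, comparing coefficients in $Ke$ forces $\esc{e,e}=1$.

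For uniqueness, suppose $\esc{\cdot,\cdot}'$ is another symmetric bilinear form with $xy=\esc{x,y}'e$ for all $x,y$. Then for every $x,y\in A$ we have $\esc{x,y}e=xy=\esc{x,y}'e$, and since $e\neq 0$ the scalars agree: $\esc{x,y}=\esc{x,y}'$. So the form making the product take the specified shape \emph{with generator $e$} is unique. (One should note that the statement is about the inner product associated to the distinguished generator $e$; different nonzero generators of $A^2$ rescale the form, which is exactly the ``unique up to scalar multiples'' phenomenon mentioned in the introduction — but fixing $e$ removes that ambiguity.)

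I do not anticipate a genuine obstacle here: the proposition is essentially an assembly of Remark~\ref{product} and Proposition~\ref{dicho}, as the text itself signals (``follows from Remark~\ref{product} and Proposition~\ref{dicho}''). The only point requiring a word of care is making sure the reader understands that ``there exists a unique inner product'' is to be read as ``unique among those expressing the product via the \emph{idempotent} $e$''; with that reading the argument is the two short steps above. The commutativity hypothesis is used (implicitly, via Remark~\ref{product}) to guarantee that $\esc{\cdot,\cdot}$ is symmetric.
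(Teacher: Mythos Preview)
Your proposal is correct and follows exactly the approach the paper intends: the paper does not even give a separate proof, merely stating that the result ``follows from Remark~\ref{product} and Proposition~\ref{dicho}'', and your write-up simply spells out that derivation (existence via Remark~\ref{product} with the generator taken to be the unique idempotent $e$ from Proposition~\ref{dicho}, the normalization $\esc{e,e}=1$ from $e^2=e$, and uniqueness by cancelling $e$). There is nothing to add or correct.
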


\begin{definition}
The inner product defined in Proposition \ref{inner} is called the {\bf canonical inner product} of $A$, and $(A, \langle \cdot, \cdot \rangle)$ the {\bf canonical inner product space}. 
\end{definition}


\begin{proposition} 
Let $A$ be an evolution algebra such that $\dim(A^2) = 1$ and $(A^2)^2 \neq 0$. Then the canonical inner product of $A$ is diagonalizable, that is, there exists an orthogonal basis of $A$.
\end{proposition}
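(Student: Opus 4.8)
The plan is to show that any \emph{natural} basis of $A$ is already orthogonal for the canonical inner product, so that no work is needed to produce an orthogonal basis. First, since $A$ is an evolution algebra, fix a natural basis $B=\{a_i\}_{i\in I}$ of the underlying vector space, so that $a_ia_j=0$ whenever $i\neq j$. By Proposition \ref{inner}, the product of $A$ can be written as $xy=\langle x,y\rangle e$, where $e$ is the unique nonzero idempotent of $A$. Applying this to two distinct basis vectors, for $i\neq j$ we get $0=a_ia_j=\langle a_i,a_j\rangle e$, and since $e\neq 0$ we conclude $\langle a_i,a_j\rangle=0$. Thus $B$ is an orthogonal basis of $(A,\langle\cdot,\cdot\rangle)$, its Gram matrix (or, in the infinite-dimensional case, the ``formal'' Gram array) is diagonal, and the canonical inner product is diagonalizable.

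The hypothesis $(A^2)^2\neq 0$ enters only to guarantee the existence of the idempotent $e$ and the fact that $e\neq 0$, which is what legitimizes the cancellation $\langle a_i,a_j\rangle e=0\Rightarrow\langle a_i,a_j\rangle=0$; alternatively one could invoke Remark \ref{product} with any nonzero generator $a$ of $A^2$. The only other point to note is that a natural basis is by definition a vector-space basis of $A$, so there is nothing to check there. There is essentially no obstacle: the entire content is the observation that the defining orthogonality of the natural basis with respect to the \emph{multiplication} of $A$ transfers verbatim to orthogonality with respect to $\langle\cdot,\cdot\rangle$ once the product is rewritten as in Proposition \ref{inner}.
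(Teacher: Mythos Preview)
Your proof is correct and follows exactly the same approach as the paper: take a natural basis $\{a_i\}_{i\in I}$, use $0=a_ia_j=\langle a_i,a_j\rangle e$ for $i\neq j$, and conclude $\langle a_i,a_j\rangle=0$ since $e\neq 0$. The additional remarks you make about the role of the hypothesis $(A^2)^2\neq 0$ are accurate but go slightly beyond what the paper records.
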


\begin{proof} 
Suppose that $\{a_i\}_{i\in I}$ is a natural basis of $A$. Then, for all $i \neq j$ we have 
\[
0 = a_ia_j = \langle a_i, a_j \rangle e,
\] 
which implies $\langle a_i, a_j \rangle = 0$. 
\end{proof}

\begin{conclu} \label{conclu1}
{\rm
We have proved that the product of an evolution algebra $A$ satisfying $(A^2)^2\ne  0$ and $\dim(A^2) = 1$,
is completely determined by a (unique) inner product $\langle \cdot, \cdot \rangle: A \times A \to K$ (diagonalizable with respect to a natural basis of $A$) and a (unique) nonzero idempotent $e$ of $A$ such that $\langle e, e \rangle = 1$. To be more precise, $xy = \langle x, y \rangle e$, for all $x, y \in A$. A kind of converse holds: if $(V, \langle \cdot, \cdot \rangle)$ is an inner product space with an orthogonal basis and a vector $v$ of norm one, then we can endow $V$ with an evolution algebra structure with the product 
$xy = \langle x, y \rangle v$, for all $x, y \in V$. Moreover, $\dim(V^2) = 1$.}
\end{conclu}

We can reformulate our problem using categories: 

\begin{itemize}
\item $\A$ denotes the category whose objects are the evolution $K$-algebras $A$ satisfying that $(A^2)^2 \ne 0$ and $\dim(A^2) = 1$, and morphisms the algebra homomorphisms; notice that $\A$ is a full subcategory of the category of all $K$-algebras;  
\item $\B$ denotes the category whose objects are triples of the form $\big(V, \langle \cdot, \cdot \rangle, v \big)$, where $(V, \langle \cdot, \cdot \rangle)$ is a (diagonalizable) inner product space, and $v \in V$ a norm one vector; a morphism $f: \big(V, \langle \cdot, \cdot \rangle, v) \to \big(V', \langle \cdot, \cdot \rangle', v'\big)$ in $\B$ is a linear map $f: V \to V'$ such that $\langle x, y \rangle f(v) = \langle f(x), f(y)\rangle'v'$, for all $x, y\in V$;
\item $\A^0$ stands for the full subcategory of $\A$ consisting of
finite dimensional evolution algebras, while $\B^0$ denotes the full subcategory of $\B$ of triples
$\big(V, \langle \cdot, \cdot \rangle, v \big)$, where $V$ is finite dimensional.
\end{itemize}

We begin by characterizing the isomorphisms in $\B$: 

\begin{theorem} \label{caractIso}
A morphism $f: \big(V, \langle \cdot, \cdot \rangle, v \big) \to \big(V', \langle \cdot, \cdot \rangle', v'\big)$ in $\B$ is an isomorphism if and only if $f$ is an isometry and $f(v) = v'$.
\end{theorem}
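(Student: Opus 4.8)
The plan is to prove both implications directly from the defining identity of a morphism in $\B$, namely $\langle x, y\rangle f(v) = \langle f(x), f(y)\rangle' v'$ for all $x, y \in V$, together with the normalizations $\langle v, v\rangle = 1$ and $\langle v', v'\rangle' = 1$.

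For the forward implication, suppose $f$ is an isomorphism in $\B$; in particular $f$ is a bijective linear map. First I would substitute $x = y = v$ into the morphism identity and use $\langle v, v\rangle = 1$ to obtain $f(v) = \langle f(v), f(v)\rangle' v'$. Hence $f(v) = \mu v'$ for the scalar $\mu := \langle f(v), f(v)\rangle'$, and expanding $\langle \mu v', \mu v'\rangle' = \mu^2$ (using $\langle v', v'\rangle' = 1$) shows $\mu = \mu^2$, so $\mu \in \{0, 1\}$. Since $v \neq 0$ and $f$ is injective, $f(v) \neq 0$, which rules out $\mu = 0$; therefore $f(v) = v'$. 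Plugging this back into the morphism identity gives $\langle x, y\rangle v' = \langle f(x), f(y)\rangle' v'$ for all $x, y \in V$, and cancelling the nonzero vector $v'$ yields $\langle x, y\rangle = \langle f(x), f(y)\rangle'$. Together with bijectivity, this shows $f$ is an isometry.

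For the converse, assume $f$ is an isometry with $f(v) = v'$. Then $\langle f(x), f(y)\rangle' v' = \langle x, y\rangle v' = \langle x, y\rangle f(v)$, so $f$ is a morphism in $\B$; and since $f^{-1}$ is again an isometry with $f^{-1}(v') = v$, the same computation applied to $f^{-1}$ shows it is a morphism in $\B$ as well. Hence $f$ is an isomorphism in $\B$.

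I do not expect a serious obstacle here: the only point needing care is to exclude the degenerate value $\mu = 0$ in the forward direction, which is exactly where the injectivity of an isomorphism and the norm-one normalization of $v$ and $v'$ enter; everything else is a direct substitution into the morphism identity.
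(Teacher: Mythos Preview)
Your proof is correct and follows essentially the same route as the paper's: both substitute $x=y=v$ into the morphism identity to get $f(v)=\mu v'$ with $\mu=\langle f(v),f(v)\rangle'$, deduce $\mu=\mu^{2}$, and conclude $f(v)=v'$, whence $f$ is an isometry. You are in fact slightly more careful than the paper, which writes ``which implies that $\lambda=1$'' without explicitly excluding $\lambda=0$; your use of injectivity (and $v\neq 0$) to rule out $\mu=0$ fills that small gap, and your explicit verification that $f^{-1}$ is a morphism makes the converse fully transparent where the paper simply says it ``clearly holds.''
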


\begin{proof} 
Suppose that $f$ is an isomorphism. Then for $\lambda := \langle f(v), f(v) \rangle' \in K$, we have that 
\[
f(v) = \langle f(v), f(v)\rangle' v' = \lambda v',
\]
since $f$ is a morphism in $\B$ and $\langle v, v \rangle = 1$. From here we obtain that
\[
\lambda v' = \langle \lambda v', \lambda v' \rangle' v' = \lambda^2 v',
\]
which implies that $\lambda = 1$. Thus $f(v) = v'$, and $f$ is an isometry. The converse clearly holds.
\end{proof}

\begin{theorem}
The categories $\A$ and $\B$ are isomorphic. 
\end{theorem}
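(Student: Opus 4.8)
The plan is to exhibit explicit functors $F\colon \A \to \B$ and $G\colon \B \to \A$ and check that they are strictly mutually inverse, so that the asserted isomorphism is an isomorphism of categories and not merely an equivalence. On objects, $F$ sends an evolution algebra $A$ in $\A$ to the triple $F(A):=(A,\esc{\cdot,\cdot},e)$, where $\esc{\cdot,\cdot}$ is the canonical inner product of $A$ and $e$ its unique nonzero idempotent; by Conclusion \ref{conclu1} this is indeed an object of $\B$ (the inner product is diagonalizable with respect to a natural basis and $\esc{e,e}=1$). On a morphism, i.e.\ an algebra homomorphism $g\colon A\to A'$, set $F(g):=g$ regarded merely as a linear map. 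This is legitimate: $g(e)$ is an idempotent of $A'$, hence equals $0$ or $e'$ by Proposition \ref{dicho}, and in either case the homomorphism identity $g(xy)=g(x)g(y)$ reads, after substituting $xy=\esc{x,y}e$ and $x'y'=\esc{x',y'}'e'$, exactly as $\esc{x,y}\,g(e)=\esc{g(x),g(y)}'\,e'$, which is the defining condition for a morphism of $\B$; moreover this computation is reversible, so $\mathrm{Hom}_{\A}(A,A')$ and $\mathrm{Hom}_{\B}(F(A),F(A'))$ coincide as sets of linear maps.

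In the other direction, $G$ sends a triple $(V,\esc{\cdot,\cdot},v)$ to the vector space $V$ equipped with the product $xy:=\esc{x,y}v$; by Conclusion \ref{conclu1} this is an evolution algebra with $\dim(V^2)=1$, and since $v^2=\esc{v,v}v=v\neq 0$ it satisfies $(V^2)^2\neq 0$, so it lies in $\A$. On a morphism $f$ of $\B$ we again set $G(f):=f$ as a linear map; then $f(xy)=f(\esc{x,y}v)=\esc{x,y}f(v)=\esc{f(x),f(y)}'v'=f(x)f(y)$, so $f$ is an algebra homomorphism. Both $F$ and $G$ leave underlying linear maps untouched, so they preserve identities and composition trivially, and are functors.

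It remains to verify $GF=\mathrm{id}_{\A}$ and $FG=\mathrm{id}_{\B}$. For the first, $GF(A)$ has underlying space $A$ and product $xy=\esc{x,y}e$, which is precisely the product of $A$ by Proposition \ref{inner}, and $GF$ is the identity on morphisms. For the second, write $B:=G(V,\esc{\cdot,\cdot},v)$. Since $v$ has norm one we have $\esc{\cdot,\cdot}\neq 0$, hence $B^2=Kv$, and $v$ is a nonzero idempotent of $B$, so by Proposition \ref{dicho} it is \emph{the} nonzero idempotent of $B$. Therefore the canonical inner product $\esc{\cdot,\cdot}_c$ of $B$ is the unique one with $xy=\esc{x,y}_c v$; comparing with $xy=\esc{x,y}v$ and cancelling $v\neq 0$ gives $\esc{\cdot,\cdot}_c=\esc{\cdot,\cdot}$. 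Hence $FG(V,\esc{\cdot,\cdot},v)=(V,\esc{\cdot,\cdot},v)$, and $FG$ is the identity on morphisms as well.

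I do not expect a genuine obstacle. The only points that require care are that $F$ and $G$ are well defined on morphisms — in particular that the morphism condition in $\B$ is exactly the homomorphism condition transported along $xy=\esc{x,y}e$, including the degenerate possibility $g(e)=0$ — and that the canonical inner product reconstructed from $G(V,\esc{\cdot,\cdot},v)$ is literally $\esc{\cdot,\cdot}$, which is where the uniqueness statements in Propositions \ref{dicho} and \ref{inner}, together with $v\neq 0$, are used. (As a sanity check, Theorem \ref{caractIso} then recovers that the isomorphisms of $\B$ are the isometries fixing the distinguished vector, matching the algebra isomorphisms on the $\A$ side.)
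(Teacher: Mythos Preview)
Your proof is correct and follows exactly the same approach as the paper: the functors $F$ and $G$ are defined identically, and you simply fill in the verifications that the paper leaves implicit (well-definedness on morphisms, and the equalities $GF=1_{\A}$, $FG=1_{\B}$). Your additional care in checking that the morphism condition in $\B$ is precisely the rewriting of the algebra-homomorphism condition via $xy=\esc{x,y}e$, and that the canonical inner product of $G(V,\esc{\cdot,\cdot},v)$ is literally $\esc{\cdot,\cdot}$ by the uniqueness in Proposition~\ref{inner}, is exactly what the paper's ``straightforward to check'' covers.
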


\begin{proof}
The functors $F: \A \to \B$ and $G: \B \to \A$ mapping an evolution algebra $A$ to the triple $\big(A, \langle \cdot, \cdot \rangle, e\big)$ (where $e$ is the unique nonzero idempotent of $A$, and $\langle \cdot, \cdot \rangle$ is its canonical inner product), and a triple $\big(V, \langle \cdot, \cdot \rangle, v\big)$ to the evolution algebra $V$ whose product is $xy := \langle x, y \rangle v$, for all $x, y \in V$, respectively, are well-defined by Proposition \ref{inner} and Conclusion \ref{conclu1}. It is straightforward to check that  
$FG = 1_{\B}$ and $GF = 1_{\A}$. 
\end{proof}

A very important consequence of this theorem is the following: 

\begin{corollary}
The problem of classifying (up to isomorphism) the evolution algebras in $\A$ is equivalent to classifying (up to isomorphism) the triples $\big(V, \langle \cdot, \cdot \rangle, v\big)$ in $\B$.  
\end{corollary}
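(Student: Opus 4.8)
The plan is to read this off directly from the preceding theorem, which produces mutually inverse functors $F\colon \A \to \B$ and $G\colon \B \to \A$ with $FG = 1_\B$ and $GF = 1_\A$. The one general principle I would invoke is that every functor carries isomorphisms to isomorphisms; combined with the two identities above, this forces $F$ and $G$ to induce mutually inverse bijections between the isomorphism classes of objects of $\A$ and those of $\B$.

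First I would check the easy direction: if $\varphi\colon A \to A'$ is an algebra isomorphism in $\A$, then $F(\varphi)\colon F(A) \to F(A')$ is an isomorphism in $\B$ (functoriality), so $F(A) \cong F(A')$. For the converse, if $F(A) \cong F(A')$ in $\B$, apply $G$ and use $GF = 1_\A$ to get $A = GF(A) \cong GF(A') = A'$. Hence $A \cong A'$ in $\A$ if and only if $F(A) \cong F(A')$ in $\B$, and symmetrically $B \cong B'$ in $\B$ if and only if $G(B) \cong G(B')$ in $\A$; the maps on isomorphism classes induced by $F$ and $G$ are inverse to each other. Consequently a complete set of invariants (or a transversal of representatives) for the objects of $\B$ transports along $G$ to one for the objects of $\A$, and conversely along $F$ — which is exactly the claimed equivalence of the two classification problems.

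I do not expect any genuine obstacle here: all the substantive work is already contained in the isomorphism-of-categories theorem, with the isomorphisms in $\B$ moreover made fully explicit by Theorem~\ref{caractIso}. The only point I would be slightly careful to state is that $F$ and $G$ are honest inverses on objects (not merely an equivalence up to natural isomorphism), so that no issues about skeletons or choices of representatives arise and the correspondence of isomorphism classes is literally a bijection.
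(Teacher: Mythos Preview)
Your argument is correct and is precisely the reasoning the paper leaves implicit: the corollary is stated without proof, as an immediate consequence of the isomorphism of categories $\A\cong\B$. Your unpacking via ``functors preserve isomorphisms'' together with $GF=1_\A$ and $FG=1_\B$ is exactly the intended justification, just made explicit.
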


Keeping in mind that two objects in $\B$ are isomorphic in the presence of an isometry, what we are indeed doing here is transitioning from an algebraic classification problem to a geometric classification problem.


\begin{remark} \label{new}
If $K$ is quadratically closed and $(V, \esc{\cdot, \cdot})$ is finite dimensional, diagonalizable and nondegenerate, then there exists a basis $B$ of $V$ such that the Gram matrix $M_B$ is the identity. As a consequence, any two diagonalizable, nondegenerate inner product spaces of the same dimension are isometric. Moreover, if $(V_1,\esc{\cdot, \cdot}_1)$ and $(V_2,\esc{\cdot, \cdot}_2)$ are such that $\dim(V_1) = \dim(V_2)$ and $\dim(V^\bot_1) = \dim(V^\bot_2)$, then $(V_1,\esc{\cdot, \cdot}_1)$ and $(V_2,\esc{\cdot, \cdot}_2)$ are isometric.
\end{remark}

Using Witt's (Isometry) Extension Theorem, we can provide a way to construct isomorphic objects in $\B^0$. Before doing so, we remind the reader a trivial property of vector spaces very useful for our purposes.

\begin{remark} \label{complemento}
Let $V$ be a vector space and $U$ a subspace of $V$. If $v \in V$ is such that $v \notin U$, then there exists a subspace $U'$ of $V$ such that $v \in U'$ and $V = U \oplus U'$. 
\end{remark}

\begin{lemma}\label{admunsen}
Let $K$ be a field of characteristic different from two or perfect of characteristic two and $(V, q, v) \in \B^0$. Suppose that $V = V^\bot \oplus V'$ for $V'$ a subspace of $V$ containing $v$. If $w \in V'$ satisfies that $q(w) = 1$, then $(V, q, v)$ and $(V, q, w)$ are isomorphic in $\B^0$. 
\end{lemma}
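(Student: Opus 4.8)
The plan is to produce an explicit isometry $f\colon V\to V$ sending $v$ to $w$ and then invoke Theorem \ref{caractIso} to conclude that $f$ is an isomorphism $(V,q,v)\to(V,q,w)$ in $\mathcal B^0$. Since both $v$ and $w$ lie in $V'$ and have norm one, the natural idea is to build the isometry on $V'$, where the form is nondegenerate, and extend it by the identity on $V^\bot$; because $V=V^\bot\oplus V'$ is an orthogonal direct sum (every element of $V^\bot$ is orthogonal to everything, in particular to $V'$), any isometry of $(V',q|_{V'})$ glued with $\mathrm{id}_{V^\bot}$ is an isometry of $(V,q)$.

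So the crux is: on the nondegenerate quadratic space $(V',q|_{V'})$, find an isometry carrying the anisotropic vector $v$ (here $q(v)=1\ne 0$) to the anisotropic vector $w$ with $q(w)=1$. This is exactly where Witt's Isometry Extension Theorem enters: the one-dimensional subspaces $Kv$ and $Kw$ are isometric via the linear map $v\mapsto w$ (both generators have norm one), and Witt extends this isometry of subspaces to an isometry of the whole nondegenerate space $V'$. First I would check the hypotheses needed to apply Witt in the stated generality — this is precisely why the lemma restricts $K$ to have characteristic different from two, or to be perfect of characteristic two, since Witt's theorem (and even the well-definedness of the orthogonal complement / the passage between quadratic and bilinear data) can fail for imperfect fields in characteristic two. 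Then I would record the extended map, call it $g\colon V'\to V'$, an isometry with $g(v)=w$.

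Finally I would assemble $f:=g\oplus\mathrm{id}_{V^\bot}\colon V\to V$, verify directly from the orthogonal decomposition that $q(f(x))=q(x)$ for all $x\in V$ (equivalently that $f$ preserves the polar form), and note $f(v)=g(v)=w$. By Theorem \ref{caractIso}, a morphism in $\mathcal B^0$ which is an isometry and sends the distinguished vector $v$ to $w$ is an isomorphism $(V,q,v)\to(V,q,w)$; hence these two objects are isomorphic in $\mathcal B^0$, as claimed. The main obstacle is not the gluing, which is routine, but making sure Witt's Extension Theorem is available under the field hypothesis imposed — i.e.\ that "characteristic $\ne 2$, or perfect of characteristic $2$" is exactly the setting in which the version of Witt we need holds for the possibly-nondiagonal quadratic space $(V',q|_{V'})$; once that is granted, the rest is bookkeeping with the orthogonal sum.
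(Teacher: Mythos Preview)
For characteristic not two your plan coincides with the paper's argument: the linear map $Kv\to Kw$, $v\mapsto w$, is an isometry of one-dimensional subspaces of the nondegenerate space $V'$, Witt's Extension Theorem promotes it to an isometry $\theta'$ of $V'$, and $\theta:=\mathrm{id}_{V^\bot}\oplus\theta'$ is an isometry of $V$ with $\theta(v)=w$; Theorem~\ref{caractIso} then gives the isomorphism in $\B^0$.

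Where you diverge from the paper is in characteristic two. You propose to run the same Witt argument and record that checking its applicability is the ``main obstacle''; your suspicion is justified, because Witt's Extension Theorem for symmetric bilinear forms is \emph{not} available in characteristic two in general, and the hypothesis ``perfect'' is not imposed in order to rescue Witt. The paper takes a different route in this case: since a perfect field of characteristic two is quadratically closed, it writes $V$ as two orthogonal sums
\[
V=Kv\oplus V^\bot\oplus V''=Kw\oplus V^\bot\oplus W'',
\]
observes that $V''$ and $W''$ are nondegenerate of the same finite dimension, and invokes Remark~\ref{new} to obtain an isometry $V''\to W''$; this is then combined with $v\mapsto w$ and the identity on $V^\bot$ to produce the required isometry of $V$. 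So your proposal handles the characteristic $\ne 2$ case exactly as the paper does, but in characteristic two your route through Witt would not go through as stated, and the paper's substitute construction (splitting off $Kv$ and $Kw$ first, then matching the nondegenerate complements via Remark~\ref{new}) is what replaces it.
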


\begin{proof}
Suppose first that the characteristic of $K$ is not two. 
Then the linear map from $Kv'$ onto $Kw'$ mapping $v'$ onto $w'$ is an isometry, which can be extended to an isometry $\theta'$ of $V'$, by Witt's (Isometry) Extension Theorem. It is straightforward to check that the map $\theta: V \to V$ given by $\theta(t + x) =  t + \theta'(x)$, for all $t \in V^\bot$ and $x \in V'$, is an isometry of $V$ mapping $v$ to $w$; the result follows from Theorem \ref{caractIso}.

Assume now that $K$ is perfect of characteristic two. In this case, $V$ can be written as orthogonal direct sums $V = Kv \oplus V' = Kw \oplus W'$ such that $V^\bot = V'^\bot = W'^\bot$. 
Thus, $V' = V^\bot \oplus V''$ and $W'= V^\bot \oplus W''$, which imply that $V = Kv \oplus V^\bot \oplus V'' = Kw \oplus V^\bot \oplus W''$. Thus $(V'', \esc{\cdot, \cdot}|_{V''})$ and $(W'', \esc{\cdot, \cdot}|_{W''})$ are nondegenerate and have the same dimension, and so they are isometric by Remark \ref{new}. If $\theta'': V'' \to W''$ is an isometry, then we can easily construct an isometry $\theta: V \to W$ such that $\theta|_{V''} = \theta''$, $\theta(v) = w$.
\end{proof}

An immediate consequence of Lemma \ref{admunsen} in terms of isomorphisms of evolution algebras follows: 

\begin{theorem} \label{uncaso} 
Let $K$ be a field of characteristic different from two or perfect of characteristic two.
Two evolution algebras $A$ and $B$ in $\A$ are isomorphic if and only if their canonical inner product spaces are isometric. Moreover, if $K$ is algebraically closed, or of characteristic two and perfect, then $A$ and $B$ are isomorphic if and only if the rank of their canonical inner products coincide; if $K = \R$, then $A$ is isomorphic to $B$ if and only if the rank and signatures of their canonical inner products coincide.
\end{theorem}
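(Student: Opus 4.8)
The plan is to derive Theorem \ref{uncaso} directly from Lemma \ref{admunsen} together with the categorical equivalence $\A \simeq \B$ and Theorem \ref{caractIso}. The first part is essentially a translation: by the isomorphism of categories, $A \cong B$ (as algebras in $\A$) if and only if their associated triples $(A, \esc{\cdot,\cdot}, e_A)$ and $(B, \esc{\cdot,\cdot}, e_B)$ are isomorphic in $\B$. By Theorem \ref{caractIso} an isomorphism in $\B$ is precisely an isometry carrying $e_A$ to $e_B$, so in one direction we immediately get that $A \cong B$ implies the canonical inner product spaces are isometric. For the converse, suppose $\varphi\colon A \to B$ is an isometry of the canonical inner product spaces. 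It need not send $e_A$ to $e_B$, so this is where Lemma \ref{admunsen} enters: since $\esc{e_B,e_B} = 1$ by Proposition \ref{inner}, the vector $w := \varphi(e_A)$ has $q(w) = q(e_A) = 1$; writing $B = B^\bot \oplus B'$ with $B'$ a nondegenerate complement chosen (using Remark \ref{complemento}) to contain both $e_B$ and $w$ — note $e_B \notin B^\bot$ since $\esc{e_B,e_B}=1\neq 0$, and similarly $w\notin B^\bot$ — Lemma \ref{admunsen} applies and yields an isomorphism in $\B$ between $(B, q, e_B)$ and $(B, q, w)$, i.e. an isometry $\psi$ of $B$ with $\psi(w) = e_B$. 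Then $\psi\circ\varphi$ is an isometry of canonical inner product spaces sending $e_A$ to $e_B$, hence an isomorphism in $\B$, hence $A \cong B$ as algebras.

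The one subtlety I expect here, and the main place care is needed, is the choice of the complement $B'$: one must be able to find a single nondegenerate complement of $B^\bot$ containing two prescribed anisotropic vectors. In characteristic not two this is unproblematic (take any complement, both $e_B$ and $w$ lie outside the radical, and one can arrange containment), but I should make sure the hypothesis of Lemma \ref{admunsen} — namely that $w$ lies in the \emph{same} complement $V'$ used in the decomposition — is genuinely met. In the perfect characteristic two case the same lemma already handles arbitrary norm-one vectors in a chosen complement, so the argument goes through verbatim once the complement is fixed. If a single common complement is awkward to produce, the fallback is to apply Lemma \ref{admunsen} twice (once to move $e_B$ to a vector in a complement containing $w$, once more if needed), composing the resulting $\B$-isomorphisms.

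For the "moreover" clauses, the strategy is to invoke the classification of inner product spaces recalled in Section 2 and specialize. Over an algebraically closed field the canonical inner product is diagonalizable and, by the first part, $A\cong B$ iff the spaces are isometric; by Remark \ref{new} (applied with the observation that a diagonalizable nondegenerate space over a quadratically closed field is determined by dimension, and in general by dimension together with $\dim$ of the radical) two diagonalizable inner product spaces over such a field are isometric precisely when their ranks agree — here dimension equals $\dim(A^2)=1$ plus the rank contribution, but since we are comparing spaces that may differ in total dimension one must be slightly careful: the relevant invariant that survives is the rank of $\esc{\cdot,\cdot}$, because two algebras in $\A$ of different total dimension but equal rank should still be isomorphic (the radical can be enlarged freely), and indeed Lemma \ref{pointed} together with the decomposition $A = \Ann(A)\oplus W$ shows the algebra structure only sees $(W, \esc{\cdot,\cdot}|_W)$ up to the pointed vector space $(\Ann(A), \text{something})$... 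I would double-check this point, since the statement asserts rank alone suffices. For $K$ perfect of characteristic two the same Remark \ref{new} argument applies. For $K = \R$, Sylvester's law of inertia (Theorem 6.8 of \cite{BAI}, recalled in Section 2) says a nondegenerate real inner product space is classified by its signature, and the rank is recovered from the signature, so $A\cong B$ iff the ranks and signatures of the canonical inner products coincide.

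The main obstacle I anticipate is not any deep technical point but rather the bookkeeping around total dimension versus rank in the "moreover" part: one must confirm that enlarging or shrinking the annihilator does not change the isomorphism class in a way that breaks the claimed invariants, which amounts to checking that the canonical inner product space $(A,\esc{\cdot,\cdot})$ — radical included — rather than merely its nondegenerate part, is what the stated invariants refer to. Reading the theorem as speaking about the full canonical inner product $(A, \esc{\cdot,\cdot})$, the rank determines everything over quadratically closed fields (since a diagonalizable space is an orthogonal sum of a nondegenerate diagonal part and a radical, and over such fields the nondegenerate part is the identity form), and rank plus signature over $\R$; with that reading the "moreover" clauses are immediate corollaries of the classifications cited in Section 2, and the only real work is the first biconditional, handled as above.
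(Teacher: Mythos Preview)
Your argument for the first biconditional is correct and is exactly what the paper does (it presents the theorem simply as ``an immediate consequence of Lemma \ref{admunsen}''): translate via the isomorphism of categories, use Theorem \ref{caractIso} to reduce to producing an isometry carrying $e_A$ to $e_B$, and invoke Lemma \ref{admunsen} to adjust an arbitrary isometry. Your concern about the complement is legitimate---a single complement of $B^\bot$ containing both $e_B$ and $w=\varphi(e_A)$ need not exist when $w-\lambda e_B\in B^\bot\setminus\{0\}$ for some scalar $\lambda$---but is easily handled: for any $r\in B^\bot$ and any linear functional $\ell$ with $\ell(r)\ne -1$, the transvection $x\mapsto x+\ell(x)r$ is a bijective isometry of $B$, so you may first replace $w$ by its component $w'$ in a fixed complement containing $e_B$ (which still satisfies $q(w')=1$) and then apply the lemma once.

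Your treatment of the ``moreover'' clauses, however, goes astray. You suggest that ``two algebras in $\A$ of different total dimension but equal rank should still be isomorphic (the radical can be enlarged freely)'', and later that one must check whether ``enlarging or shrinking the annihilator does not change the isomorphism class''. Both are false: an algebra isomorphism is in particular a linear isomorphism, so $\dim A=\dim B$ is always necessary, and changing $\dim\Ann(A)$ certainly changes the isomorphism type. The resolution is that equal dimension is already forced by the first clause of the theorem (isometric spaces have equal dimension), so the ``moreover'' clauses are to be read as specializations of that clause: once $A\cong B$ is known to be equivalent to isometry of the canonical inner product spaces, one invokes the classifications recalled in Section~2, under which two diagonalizable inner product spaces \emph{of the same dimension} are isometric iff their ranks agree (over an algebraically closed or perfect characteristic-two field, via Remark \ref{new}) or iff rank and signature agree (over $\R$). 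Do not attempt to argue that rank alone determines the isomorphism class across different dimensions; that claim is simply wrong.
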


We close this first case with some concrete examples: 

\begin{example}
In dimension 4, Theorem \ref{uncaso} tells us that there are four different isomorphic classes in $\mathcal{A}_\C$, which correspond 
to the triples $(\C^4, q_i, e_1)$, where $e_1 = (1, 0, 0, 0)$, and $q_1, q_2, q_3, q_4$ are as follows:
\begin{align*}
q_1(x_1, x_2, x_3, x_4) & = x_1^2,
\\
q_2(x_1, x_2, x_3, x_4) & = x_1^2 + x_2^2,
\\
q_3(x_1, x_2, x_3, x_4) & = x_1^2 + x_2^2 + x_3^2,
\\
q_4(x_1, x_2, x_3, x_4) & = x_1^2 + x_2^2 + x_3^2 + x_4^2,
\end{align*}
with respect to the canonical basis of $\C^4$.
\end{example}

\begin{example}
In dimension 3, Theorem \ref{uncaso} reveals that we have six different isomorphic classes in 
$\mathcal{A}_\R$ corresponding to the triples $(\R^3, q_i, e_1)$, where $e_1 = (1, 0, 0)$ and $q_1, \ldots, q_6$ are displayed in the table below, where the coordinates are with respect to the canonical basis of $\R^3$.
\begin{center} 
\begin{tabular}{|c|c|c|}
\hline
$q_i$ & rank & signature\cr
\hline 
$x_1^2$                 & $1$ & $(1,0)$  \cr 
$x_1^2 + x_2^2$         & $2$ & $(2,0)$  \cr 
$x_1^2 - x_2^2$         & $2$ & $(1,1)$  \cr 
$x_1^2 + x_2^2 + x_3^2$ & $3$ & $(3,0)$  \cr 
$x_1^2 + x_2^2 - x_3^2$ & $3$ & $(2,1)$  \cr 
$x_1^2 - x_2^2 - x_3^2$ & $3$ & $(1,2)$  \cr 
\hline
\end{tabular}
\end{center}
\end{example}

\begin{example}
Let $\mathbf{F}_4$ be the field of four elements, which is perfect. In dimension 3, Theorem \ref{uncaso} tells us that there are three distinct classes in $\mathcal{A}_{\mathbf{F}_4}$, which correspond to the inner products (on the $\mathbf{F}_4$-vector space $\mathbf{F}^3_4$) whose matrices are $\mathrm{Id}$, and the diagonal matrices 
$\mathrm{diag}(1, 1, 0)$ and $\mathrm{diag}(1, 0, 0)$.
\end{example} 

\begin{remark}
Recall that quadratic forms on a finite dimensional vector space over a finite field of odd characteristic are classified (up to congruence) by their rank and discriminant. Thus $(V_1,q_1,v_1) \cong (V_2, q_2, v_2)$ in $\B^0$ if and only if 
$\dim(V_1) = \dim(V_2)$, $\dim(V_1^\bot) = \dim(V_2^\bot)$ and the discriminant of ${q_1}\vert_{V'_1}$ coincides with that of ${q_2}\vert_{V'_2}$, where $v_i\in V'_i$ and
$V'_i$ is the complement of $V_i^\bot$, for $i = 1, 2$.
\end{remark}

\begin{example}
Let $K = \mathbf{F}_3(i) = \{0, \pm 1, \pm i, \pm(1 + i), \pm(1 - i)\}$, where $i^2 = -1$, be the field of nine elements. Notice that $K$ can be seen as an extension of the field of three elements $\mathbf{F}_3$ by adjoining an element of square $-1$.
We have that $(K^\times)^2 = \{ \pm 1, \pm i\}$ is the cyclic group of order $4$, and the quotient group 
$K^\times/(K^\times)^2$ is the cyclic group of order $2$. We can then write $K^\times/(K^\times)^2 = \{[1],[\omega]\}$, where $\omega = 1 + i$ and $[\cdot]$ denotes the corresponding equivalence class.
From here we obtain that the discriminant of a quadratic form over $K$ is either $[1]$ or $[\omega]$.

In particular, for $V = K^n$ and $q: V \to K$ nondegenerate, we have two possibilities: either $q$ is congruent to $x_1^2 +x_2^2+ \cdots + x_n^2$ or to $\omega x_1^2 + x_2^2 + \cdots + x_n^2$. A natural question arises: how many isomorphic classes of $3$-dimensional evolution $K$-algebras $A$ with $\dim(A^2)=1$ and $(A^2)^2\ne 0$ are there?
\newline 
We obtain the three following types: 
\begin{enumerate}
\item $\hbox{Ann}(A)=0$, then $A\cong K^3$ with product
\begin{align*}
(x,y,z)(x',y',z') & = (xx'+yy'+zz')(1,0,0), \mbox{ or }
\\
(x,y,z)(x',y',z') & = (\omega xx'+yy'+zz')(1,0,0).
\end{align*}
\item $\dim(\hbox{Ann}(A))=1$, then $A\cong K\times K^2$ with product 
\begin{align*}
(x,y,z)(x',y',z') &= (yy'+zz')(0,1,0), \mbox{ or }
\\ 
(x,y,z)(x',y',z') &= (\omega yy'+zz')(0,1,0).
\end{align*}
\item $\dim(\hbox{Ann}(A))=2$, then $A\cong K^2\times K$ with  product 
\begin{align*}
(x,y,z)(x',y',z') & =zz'(0,0,1), \mbox{ or } 
\\
(x,y,z)(x',y',z') &= \omega zz' (0,0,1).
\end{align*}
\end{enumerate}
\end{example}

\subsection{Case $(A^2)^2 = 0$} We treat now the remaining case: evolution algebras $A$ such that $\dim (A^2) = 1$ and $(A^2)^2 = 0$. 

\smallskip

\begin{lemma} \label{lemmaanndecomp}
Let $A$ be an evolution $K$-algebra such that $\dim(A^2) = 1$ and $(A^2)^2 = 0$. Then 
$A = \Ann(A) \oplus W$, where
$(W, \esc{\cdot, \cdot}\vert_W)$ is nondegenerate and has an orthogonal basis $\{w_i\}_{i \in I}$ of nonisotropic vectors. 
Moreover, if $K$ is quadratically closed, then $\{w_i\}_{i \in I}$ is an orthonormal basis. 
\end{lemma}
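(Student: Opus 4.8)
The plan is to extract everything from a natural basis of $A$ together with Remark~\ref{product}. Fix a natural basis $\{a_i\}_{i\in I}$ of $A$, and let $\esc{\cdot,\cdot}$ be the inner product on $A$ with $xy=\esc{x,y}a$ for a fixed generator $a$ of $A^2$, as in Remark~\ref{product}; recall from that remark that $\Ann(A)=A^\bot$, so that the decomposition we seek is an instance of \eqref{decomA}. Since $a_ia_j=0$ for $i\neq j$, we get $\esc{a_i,a_j}=0$ whenever $i\neq j$, i.e.\ the natural basis is orthogonal for $\esc{\cdot,\cdot}$. This is the crux: although a nondegenerate inner product space need not admit an orthogonal basis in characteristic two, the evolution-algebra hypothesis hands us one for free, and the rest of the lemma is bookkeeping around it.

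Next I would split the index set as $I=I_0\sqcup I_1$, with $i\in I_0$ when $a_i$ is isotropic ($\esc{a_i,a_i}=0$) and $i\in I_1$ otherwise. For $i\in I_0$ one has $a_iA=0$, since $a_ia_j=0$ for $j\neq i$ and $a_i^2=\esc{a_i,a_i}a=0$; hence $\mathrm{span}\{a_i:i\in I_0\}\subseteq\Ann(A)$. For the reverse inclusion, if $x=\sum_i\lambda_ia_i\in\Ann(A)=A^\bot$, then $0=\esc{x,a_j}=\lambda_j\esc{a_j,a_j}$ for every $j$, forcing $\lambda_j=0$ for all $j\in I_1$; thus $\Ann(A)=\mathrm{span}\{a_i:i\in I_0\}$. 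Setting $W:=\mathrm{span}\{a_i:i\in I_1\}$ we obtain $A=\Ann(A)\oplus W$ simply because $\{a_i\}_{i\in I}$ is a basis, and $\{a_i\}_{i\in I_1}$ is by construction an orthogonal basis of $W$ consisting of nonisotropic vectors. Nondegeneracy of $(W,\esc{\cdot,\cdot}\vert_W)$ is then immediate: if $w=\sum_{i\in I_1}\mu_ia_i$ satisfies $\esc{w,W}=0$, then $\esc{w,a_j}=\mu_j\esc{a_j,a_j}=0$ gives $\mu_j=0$ for all $j\in I_1$, so $w=0$.

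Finally, for the addendum assume $K$ is quadratically closed. For each $i\in I_1$ choose $d_i\in K$ with $d_i^2=\esc{a_i,a_i}$ and replace $a_i$ by $a_i':=d_i^{-1}a_i$ (leaving $a_i':=a_i$ for $i\in I_0$); rescaling basis vectors preserves the relations $a_i'a_j'=0$ for $i\neq j$, so $\{a_i'\}_{i\in I}$ is again a natural basis of $A$, now with $\esc{a_i',a_i'}=d_i^{-2}\esc{a_i,a_i}=1$ for $i\in I_1$. Since the span of $\{a_i':i\in I_1\}$ is still $W$ and that of $\{a_i':i\in I_0\}$ is still $\Ann(A)$, running the previous paragraph with this basis produces the orthonormal basis $\{a_i':i\in I_1\}$ of $W$. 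I do not anticipate a genuine obstacle; the only points requiring care are to avoid invoking a general diagonalization or Gram--Schmidt argument (these fail in characteristic two) and to use the natural basis throughout instead, and to note that rescaling a natural basis keeps it natural.
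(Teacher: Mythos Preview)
Your proof is correct. Both arguments hinge on the same observation---that any natural basis is orthogonal for $\esc{\cdot,\cdot}$---but the constructions of $W$ diverge. The paper first invokes the abstract decomposition \eqref{decomA} to obtain \emph{some} nondegenerate complement $W$, then projects each natural basis vector $e_i = r_i + w_i$ onto $W$ and checks that the nonzero projections $\{w_i\}$ form an orthogonal basis of nonisotropic vectors; this shows, incidentally, that \emph{every} nondegenerate complement of $\Ann(A)$ admits such a basis. You instead build $W$ explicitly as the span of the anisotropic natural basis vectors and verify directly that the isotropic ones span $\Ann(A)$. Your route is more concrete and avoids the preliminary appeal to \eqref{decomV}/\eqref{decomA}, at the cost of producing only one particular $W$ rather than treating an arbitrary one---which is all the lemma asks for.
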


\begin{proof}
By \eqref{decomA} we have $A = \mathrm{Ann}(A) \oplus W$, for $W$ a subspace of $A$ such that $(W, \esc{\cdot, \cdot}\vert_W)$ is nondegenerate. Take $\{e_i\}_{i \in J}$ a natural basis of $A$, and express each $e_i$ as $e_i = r_i + w_i$, for $r_i \in \Ann(A)$ and $w_i \in W$. 
Let $J_W = \{i \in J \mid w_i \neq 0\}$. We claim that $\{w_i\}_{i \in J_W}$ is a basis of $W$; in fact, it clearly spans $W$, and for $i \neq j$ we have that
\[
0 = e_ie_j = (r_i + w_i)(r_j + w_j) = r_ir_j + r_iw_j + w_ir_j + w_iw_j = w_iw_j,
\] 
since $r_i, r_j \in \Ann(A)$. This shows that the $w_i's$ are pairwise orthogonal. Now if $w_i \neq 0$ and $w^2_i = 0$ for some $i$, then $\langle w_i, w_i \rangle = 0$, and so $\langle w_i, W \rangle = 0$, which implies that $w_i \in W^\bot \cap W = 0$, a contradiction. Thus $\langle w_i, w_j \rangle = 0$ if $i \neq j$, and $\langle w_i, w_i \rangle \neq 0$ (provided that $w_i \neq 0$), which implies that  
the $w_i's$ are linearly independent, and so $\{w_i\}_{i \in J_W}$ is a basis of $W$.

To finish, notice that if $K$ is quadratically closed, then the set $\Big\{\frac1{\sqrt{\langle w_i, w_i\rangle}} w_i \Big\}_{i \in J_W}$ is an orthonormal basis of $W$. This finishes the proof.
\end{proof} 

\smallskip 

Let $A$ be an evolution algebra such that $\dim (A^2) = 1$ and $(A^2)^2 = 0$. We proceed like in Remark \ref{product}, and we write the product in $A$ as 
\[
xy = \langle x, y \rangle a, \, \mbox{ for all } \, x, y \in A,
\]
where $a \in A$ satisfies $A^2 = Ka$.
Notice that $(A^2)^2 = 0$ implies $a^2 = 0$, and so $\langle a, a \rangle = 0$, which says that $a$ is isotropic. In what follows, we distinguish two sub cases depending on whether $a \in \Ann(A)$ or $a \notin \Ann(A)$.

\subsubsection{{\bf Sub case:} $a \in \Ann(A)$, or equivalently, $A^3 = 0$}


It turns out that $A$ is associative (and commutative) in this case.

\begin{remark}
Let $A$ be an evolution algebra as in Lemma \ref{lemmaanndecomp}. If $A^3 = 0$, then the basis $\{w_i\}$ of $W$ is such that $w^2_i \in \Ann(A)$ for all $i$.
\end{remark}

\begin{theorem} \label{Iso1}
Let $A$ and $B$ be evolution algebras satisfying that $\dim(A^2) = \dim(B^2) = 1$, $(A^2)^2 = A^3 = 0$ and $(B^2)^2 = B^3 = 0$. 
Write $A = \Ann(A) \oplus W_A$ and $B = \Ann(B) \oplus W_B$ as in \eqref{decomA}.
Then $A$ and $B$ are isomorphic if and only if $\dim(\Ann(A)) = \dim(\Ann(B))$, and the spaces $W_A$ and $W_B$ are isometric.
\end{theorem}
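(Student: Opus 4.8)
The plan is to prove the two implications separately, relying throughout on the fact that $\Ann(A) = A^{\bot}$ and $\Ann(B) = B^{\bot}$ (Remark~\ref{product}, since the squares are one-dimensional) and on two elementary facts about an algebra isomorphism $f\colon A \to B$: it carries $\Ann(A)$ onto $\Ann(B)$ and $A^{2}$ onto $B^{2}$. Throughout I write the products as $xy = \langle x,y\rangle_{A}\,a$ and $xy = \langle x,y\rangle_{B}\,b$, with $a,b$ generators of $A^{2},B^{2}$; note that $a \in \Ann(A)$ and $b \in \Ann(B)$ here, since $A^{3} = B^{3} = 0$. An observation I will use twice is that, because $\Ann(A) = A^{\bot}$, the product of $x = t_{1}+w_{1}$ and $y = t_{2}+w_{2}$ (with $t_{i}\in\Ann(A)$, $w_{i}\in W_{A}$) depends only on the $W_{A}$-components: $xy = \langle w_{1},w_{2}\rangle_{W_{A}}\,a$, and symmetrically on the $B$-side.

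For the implication $(\Leftarrow)$, assume $\dim(\Ann(A)) = \dim(\Ann(B))$ and fix an isometry $g\colon W_{A} \to W_{B}$. First I would choose bases of $\Ann(A)$ and of $\Ann(B)$ containing $a$ and $b$ respectively — they have the same cardinality — and, sending $a\mapsto b$ and matching the remaining vectors, obtain a linear isomorphism $h\colon \Ann(A) \to \Ann(B)$ with $h(a)=b$. Then I claim $f := h \oplus g\colon \Ann(A)\oplus W_{A} \to \Ann(B)\oplus W_{B}$ is an algebra isomorphism. It is visibly a linear isomorphism, and multiplicativity is a short check using the observation above: $f(x)f(y) = \langle g(w_{1}),g(w_{2})\rangle_{W_{B}}\,b = \langle w_{1},w_{2}\rangle_{W_{A}}\,b = h\big(\langle w_{1},w_{2}\rangle_{W_{A}}\,a\big) = f(xy)$, using $\Ann(B) = B^{\bot}$, that $g$ is an isometry, and $h(a) = b$.

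For $(\Rightarrow)$, let $f\colon A \to B$ be an algebra isomorphism. Then $f(\Ann(A)) = \Ann(B)$ forces $\dim(\Ann(A)) = \dim(\Ann(B))$, and $f(A^{2}) = B^{2}$ gives $f(a) = \mu b$ for some $\mu \in K^{\times}$. Comparing $f(x)f(y) = \langle f(x),f(y)\rangle_{B}\,b$ with $f(xy) = \langle x,y\rangle_{A}\,f(a) = \mu\,\langle x,y\rangle_{A}\,b$ yields $\langle f(x),f(y)\rangle_{B} = \mu\,\langle x,y\rangle_{A}$ for all $x,y$, i.e. $f$ is a similitude of multiplier $\mu$. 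Since $f$ carries the decomposition $A = \Ann(A)\oplus W_{A}$ to $B = \Ann(B)\oplus f(W_{A})$ and rescales the form by $\mu\ne 0$, the subspace $f(W_{A})$ is a nondegenerate complement of $B^{\bot}$, hence isometric to $W_{B}$ — every nondegenerate complement of the radical is isometric to $B/B^{\bot}$ with its induced form, so all of them are mutually isometric. Thus $W_{B}$ is isometric to $W_{A}$ equipped with $\mu$ times its form; as the generator $a$ of $A^{2}$, and with it the inner product $\langle\cdot,\cdot\rangle_{A}$, is determined only up to a nonzero scalar, replacing $a$ by $\mu^{-1}a$ absorbs the factor $\mu$, and $W_{A}$ and $W_{B}$ are isometric, as claimed.

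The one genuine obstacle is precisely this scalar $\mu$: an algebra isomorphism induces a similitude rather than an isometry between the associated inner product spaces, and one must observe that the indeterminacy in the choice of generator of $A^{2}$ is exactly what is needed to cancel it (equivalently, the statement is to be read with $\langle\cdot,\cdot\rangle$ fixed only up to scalar). Everything else — that $\Ann = {}^{\bot}$ makes the form depend only on the $W$-components, and that nondegenerate complements of the radical are mutually isometric — is routine bookkeeping, valid in arbitrary dimension.
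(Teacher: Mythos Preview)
Your argument is correct and follows essentially the same route as the paper's: in the forward direction you build the isomorphism as a direct sum $h\oplus g$ with $h(a)=b$, and in the converse you exploit the freedom in the choice of generator of $A^{2}$ to turn the similitude into an isometry. The only cosmetic difference is that the paper absorbs the scalar by invoking Lemma~\ref{pointed} to take $b=F(a)$ from the outset (and then identifies $W_B$ with $F(W_A)$ directly), whereas you keep $b$ fixed, rescale $a$, and explicitly note that all nondegenerate complements of the radical are mutually isometric---arguably the cleaner bookkeeping.
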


\begin{proof}
Suppose first that $\theta: (W_A, \langle \cdot, \cdot \rangle \vert_{W_A}) \to (W_B, \langle \cdot, \cdot \rangle \vert_{W_B})$ is an isometry 
and that $\dim(\Ann(A)) = \dim(\Ann(B))$. For a linear isomorphism $\xi: \Ann(A) \to \Ann(B)$, one can easily check that the map $F: A \to B$ given by $F(y + z) = \xi(y) + \theta(z)$, for all $y \in \Ann(A)$ and $z \in W_A$, is the desired isomorphism. 

Conversely, suppose that $F: A \to B$ is an isomorphism. Then $B = \hbox{Ann}(B)\oplus F(W_A)$, which implies $W_B = F(W_A)$. If $a \in A$ is such that $A^2 = Ka$, then Lemma \ref{pointed} allows us to choose $b = F(a)$, as the generator of $B^2$.
It is clear that $F\vert_{\Ann(A)}$ is a linear isomorphism, and so $\dim(\Ann(A)) = \dim(\Ann(B))$. It remains to show that $\theta = F\vert_{W_A}$
is an isometry; in fact, for $z_1, z_2 \in W_A$ we have that 
\[
z_1z_2  = \langle z_1, z_2 \rangle a,
\]
which implies that 
\[
\langle z_1, z_2 \rangle b = F(z_1z_2) = F(z_1)F(z_2) = \theta(z_1)\theta(z_2) = 
\langle \theta(z_1),  \theta (z_2) \rangle b.
\]
Thus: $\langle z_1, z_2 \rangle = \langle \theta(z_1),  \theta (z_2) \rangle$, proving that $\theta$ is an isometry, as desired. 
\end{proof}

\begin{example}
Using Theorem \ref{Iso1} we can determine the 3-dimensional real evolution algebras $A$ such that $\dim(A^2) = 1$ and $(A^2)^2 = A^3 = 0$. In fact, for 
$d = \dim(\Ann(A))$, we have the following cases:
\begin{itemize}
\item $d = 2$: We have that $A \cong \R^3$ with $\dim(W) = 1$. There are two nonisomorphic algebras with products given by 
\[
(x_1, x_2, x_3)(y_1, y_2, y_3) = (\pm x_3y_3, 0, 0).
\]
\item $d = 1$: We have that $A \cong \R^3$ with $\dim(W) = 2$. There are three nonisomorphic algebras with products given by  
\begin{align*}
(x_1, x_2, x_3)(y_1, y_2, y_3) &= (x_2y_2 + x_3y_3, 0, 0);
\\
(x_1, x_2, x_3)(y_1, y_2, y_3) &= (x_2y_2 - x_3y_3, 0, 0);
\\
(x_1, x_2, x_3)(y_1, y_2, y_3) &= (-x_2y_2 - x_3y_3, 0, 0).
\end{align*}
\item $d = 0$: We have that $A \cong \R^3$ with $\dim(W) = 3$. There are four nonisomorphic algebras with products given by  
\begin{align*}
(x_1, x_2, x_3)(y_1, y_2, y_3) &= (x_1y_1 + x_2y_2 + x_3y_3, 0, 0);
\\
(x_1, x_2, x_3)(y_1, y_2, y_3) &= (x_1y_1 + x_2y_2 - x_3y_3, 0, 0);
\\
(x_1, x_2, x_3)(y_1, y_2, y_3) &= (x_1y_1 -x_2y_2 - x_3y_3, 0, 0);
\\
(x_1, x_2, x_3)(y_1, y_2, y_3) &= (-x_1y_1 -x_2y_2 - x_3y_3, 0, 0).
\end{align*}
\end{itemize}
\end{example}

We can improve Theorem \ref{Iso1}, provided that the field $K$ is quadratically closed. 

\begin{theorem} \label{A3=0}
Let $K$ be a quadratically closed field.
Then the isomorphic class of an evolution algebra $A$ with $\dim(A^2) = 1$ and $(A^2)^2 = A^3 = 0$ is completely determined by $\dim (A)$ and $\dim \big(\Ann(A)\big)$.
\end{theorem}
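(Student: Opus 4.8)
The plan is to obtain this as a quick corollary of Theorem~\ref{Iso1} together with the observation, already recorded in Remark~\ref{new}, that over a quadratically closed field a nondegenerate diagonalizable inner product space is determined up to isometry by its dimension alone. So the proof will have essentially no new content; it is a matter of feeding the right inputs into the right earlier result.

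Concretely, first I would write $A = \Ann(A)\oplus W_A$ and $B = \Ann(B)\oplus W_B$ as in \eqref{decomA}, and invoke Theorem~\ref{Iso1}: the algebras $A$ and $B$ are isomorphic precisely when $\dim(\Ann(A)) = \dim(\Ann(B))$ and $W_A$ is isometric to $W_B$. Next I would apply Lemma~\ref{lemmaanndecomp} to each algebra: since $K$ is quadratically closed, $W_A$ is nondegenerate and carries an \emph{orthonormal} basis $\{w_i\}_{i\in I_A}$, and likewise $W_B$ carries an orthonormal basis $\{w'_j\}_{j\in I_B}$. An isometry $W_A\to W_B$ sends an orthonormal basis to an orthonormal basis, so its existence forces $|I_A| = |I_B|$; conversely, any bijection $I_A\to I_B$ induces, by sending one orthonormal basis to the other, an isometry $W_A\to W_B$. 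Hence $W_A$ and $W_B$ are isometric if and only if $\dim(W_A) = \dim(W_B)$. (In finite dimension this is exactly Remark~\ref{new}; the same argument works verbatim for arbitrary dimension.)

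Finally I would combine these two equivalences with the identity $\dim(A) = \dim(\Ann(A)) + \dim(W_A)$ (and similarly for $B$). This shows that $A\cong B$ if and only if $\dim(\Ann(A)) = \dim(\Ann(B))$ and $\dim(A) = \dim(B)$, which is precisely the assertion that the isomorphism class of $A$ is completely determined by the pair $\big(\dim(A), \dim(\Ann(A))\big)$.

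There is no real obstacle here: the entire difficulty has been absorbed into Theorem~\ref{Iso1} and Lemma~\ref{lemmaanndecomp}. The only point requiring a word of care is the claim that a diagonalizable nondegenerate inner product space over a quadratically closed field is classified up to isometry by its dimension in the possibly infinite-dimensional setting; this is handled by the basis-to-basis construction of isometries sketched above, and if one prefers one may simply restrict attention to the finite-dimensional case and cite Remark~\ref{new} directly.
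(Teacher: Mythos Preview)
Your proposal is correct and follows essentially the same route as the paper: invoke Theorem~\ref{Iso1} to reduce the isomorphism question to an isometry question between $W_A$ and $W_B$, and then use Lemma~\ref{lemmaanndecomp} (orthonormal bases over a quadratically closed field) to conclude that this isometry class is determined by dimension alone. Your extra remark about handling the infinite-dimensional case via a bijection of orthonormal bases is a small but welcome clarification beyond what the paper writes explicitly.
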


\begin{proof}
Let $B$ be an evolution algebra with $\dim(B^2) = 1$ and $(B^2)^2 = B^3 = 0$. If $\dim(B) = \dim(A)$ and $\dim \big(\Ann(B)\big) = \dim \big(\Ann(A)\big)$, then $\dim(W_A) = \dim(W_B)$ by \eqref{decomA}. Thus, $W_A$ and $W_B$ are isometric by Lemma \ref{lemmaanndecomp}.

\noindent The converse follows from Theorem \ref{Iso1} and \eqref{decomA}.
%
%
%
\end{proof}

\begin{example}
Over $\C$, in dimension 4, Theorem \ref{A3=0} tells us that there are four different (isomorphic) classes of evolution algebras $A$ with $\dim(A^2) = 1$, $(A^2)^2 = A^3 = 0$ for $\dim(\Ann(A)) = 1$. Their quadratic forms are given by
\begin{align*}
q_1(x_1, x_2, x_3, x_4) & = x_2^2 + x_3^2 + x_4^2,
\\
q_2(x_1, x_2, x_3, x_4) & = x_3^2 + x_4^2,
\\
q_3(x_1, x_2, x_3, x_4) & = x_4^2,
\end{align*}
with respect to the canonical basis of $\C^4$.
\end{example}

\begin{example}
Let us now classify the $3$-dimensional evolution algebras $A$ with $\dim(A^2) = 1$ and $(A^2)^2 = A^3 = 0$ over the field $K$ of nine elements.
In this case the existence of a nonzero annihilator is compulsory, and we obtain two types: 
\begin{enumerate}
\item $\dim(\hbox{Ann}(A))=1$, then $A\cong K\times K^2$ with  product 
\begin{align*}
(x,y,z)(x',y',z') & =(yy'+zz')(1,0,0), \mbox{ or }
\\
(x,y,z)(x',y',z') & =(\omega yy'+zz')(1,0,0).
\end{align*}
\item $\dim(\hbox{Ann}(A))=2$, then $A\cong K^2\times K$ with product
\begin{align*}(x,y,z)(x',y',z') &= zz'(1,0,0), \mbox{ or }
\\ 
(x,y,z)(x',y',z') &= \omega zz' (1,0,0).
\end{align*}
 \end{enumerate}
 \end{example}
 
\subsubsection{{\bf Sub case:} $A^3 \neq 0$} Suppose that $A = \Ann(A) \oplus W$ is as in \eqref{decomA}. If $a = x + w$, for $x \in \Ann(A)$ and $w \in W$, then $w$ is also isotropic; in fact:
\begin{equation} \label{isotropico}
0 = \langle a, a \rangle = \langle x, x \rangle + 2 \langle x, w \rangle + \langle w, w \rangle = \langle w, w \rangle.
\end{equation}

The proof of the next result is very similar to the proof of Theorem \ref{Iso1}. We provide a sketch of it and leave the details to the reader. 

\begin{theorem} \label{Iso2}
Let $A$ and $B$ be evolution algebras such that $\dim(A^2) = \dim(B^2) = 1$, $(A^2)^2 = 0$, $(B^2)^2 = 0$ and that both $A^3$ and $B^3$ are nonzero. Let $a \in A$ and $b \in B$ such that $A^2 = Ka$ and $B^2 = Kb$. Suppose that $A = \Ann(A) \oplus W_A$ and $B = \Ann(B) \oplus W_B$ as in \eqref{decomA}, and let $a = x + w$, $b = x' + w'$, where $x \in \Ann(A)$, $x' \in \Ann(B)$, $w \in W_A$ and $w' \in W_B$.
Then $A$ and $B$ are isomorphic if and only if $\dim(\Ann(A)) = \dim(\Ann(B))$, and there exists an isometry $\theta: W_A \to W_B$ such that $\theta(w) = w'$.
\end{theorem}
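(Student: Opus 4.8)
The plan is to transcribe the proof of Theorem~\ref{Iso1} almost verbatim; the only new feature is that the isotropic components $w$ and $w'$ of the two generators have to be carried along. Note first that the standing hypotheses $(A^2)^2=0$ and $A^3\neq 0$ force $a\notin\Ann(A)=A^\bot$ (and likewise $b\notin\Ann(B)$), so $w\neq 0$ and $w'\neq 0$. For \emph{sufficiency}, assume $\dim(\Ann(A))=\dim(\Ann(B))$ and that $\theta\colon W_A\to W_B$ is an isometry with $\theta(w)=w'$. Since the projection $A\to A/\Ann(A)$ restricts to an isometry $W_A\xrightarrow{\sim}A/\Ann(A)$ carrying $w$ to the class $\bar a$ of $a$ (and similarly for $B$, carrying $w'$ to $\bar b$), I would first replace $W_A$ by a complement $\widetilde W_A$ of $\Ann(A)$ containing $a$ and $W_B$ by a complement $\widetilde W_B$ of $\Ann(B)$ containing $b$ (Remark~\ref{complemento}), transporting $\theta$ through these identifications to an isometry $\widetilde\theta\colon\widetilde W_A\to\widetilde W_B$ with $\widetilde\theta(a)=b$. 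Then, for any linear isomorphism $\xi\colon\Ann(A)\to\Ann(B)$, the map $F:=\xi\oplus\widetilde\theta$ is the desired algebra isomorphism: it is a linear bijection with $F(a)=b$, and for $u=u_1+z_1$, $v=v_1+z_2$ with $u_i\in\Ann(A)$ and $z_i\in\widetilde W_A$, using $\Ann(A)=A^\bot$ and that $\widetilde\theta$ is an isometry,
\[
F(uv)=F\big(\esc{z_1,z_2}\,a\big)=\esc{z_1,z_2}\,b=\esc{\widetilde\theta(z_1),\widetilde\theta(z_2)}\,b=F(u)F(v).
\]

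For \emph{necessity}, suppose $F\colon A\to B$ is an isomorphism. As in Theorem~\ref{Iso1}, $F$ maps $\Ann(A)$ onto $\Ann(B)$ and $A^2$ onto $B^2$, so $\dim(\Ann(A))=\dim(\Ann(B))$ and $F(a)$ generates $B^2$; by Lemma~\ref{pointed} I may take $b:=F(a)$ and $W_B:=F(W_A)$. Comparing $F(z_1z_2)=\esc{z_1,z_2}F(a)$ with $F(z_1)F(z_2)=\esc{F(z_1),F(z_2)}F(a)$ for all $z_1,z_2\in W_A$ and cancelling the nonzero element $F(a)$ shows that $\theta:=F|_{W_A}\colon W_A\to W_B$ is an isometry. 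Finally, applying $F$ to $a=x+w$ gives $b=F(x)+F(w)$ with $F(x)\in\Ann(B)$ and $F(w)=\theta(w)\in W_B$; by uniqueness of the decomposition $b=x'+w'$ relative to $B=\Ann(B)\oplus W_B$ we conclude $w'=\theta(w)$, as required.

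\textbf{Main obstacle.} The one point that needs genuine care — and the sole departure from Theorem~\ref{Iso1} — is that the generator $a$ of $A^2$ is now pinned down only up to a nonzero scalar (there is no distinguished idempotent available, in contrast with Proposition~\ref{inner} and Theorem~\ref{caractIso}), and the complement $W_A$ is likewise not canonical. One must therefore be willing to adjust these choices: picking complements through $a$ and through $b$ in the sufficiency part (so that the isometry literally sends $a$ to $b$, and $\xi$ may then be taken arbitrary), and picking $b=F(a)$, $W_B=F(W_A)$ in the necessity part, exactly as in the proof of Theorem~\ref{Iso1}. Everything else is routine bookkeeping, which is why the details can be safely left to the reader.
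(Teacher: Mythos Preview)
Your argument is correct and follows the same plan as the paper: both directions are obtained by mimicking the proof of Theorem~\ref{Iso1}, keeping track of the decompositions $a=x+w$ and $b=x'+w'$. The only (minor) divergence is in the sufficiency step: the paper keeps the given complements $W_A,W_B$ and forces $F(a)=b$ by choosing the linear isomorphism $\xi:\Ann(A)\to\Ann(B)$ with $\xi(x)=x'$, while you instead pass through the quotients $A/\Ann(A)$, $B/\Ann(B)$ to replace the complements by ones containing $a$ and $b$ and then let $\xi$ be arbitrary. Your detour has the small advantage of not needing to worry about whether such a $\xi$ exists when exactly one of $x,x'$ is zero; otherwise the two arguments are interchangeable. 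The necessity direction is identical to the paper's.
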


\begin{proof}
If $\dim(\Ann(A)) = \dim(\Ann(B))$ and $\theta: W_A \to W_B$ is an isometry such that $\theta(w) = w'$, then choose a linear isomorphism $\xi: \Ann(A) \to \Ann(B)$ such that $\xi(x) = x'$ and proceed like in the proof of Theorem \ref{Iso1}.

For the converse, if $F: A \to B$ is an isomorphism, reason like in the proof of Theorem \ref{Iso1} by noticing that $F(x) = x'$ and $F(w) = w'$.
\end{proof}

\begin{example} \label{F4ejemplo2}
Let $K$ be the field of $4$ elements: $K = \hbox{\bf F}_4 = \{0, 1, \alpha, \beta\}$, where $\alpha + \beta = 1$, $\alpha^2 = \beta$, $\beta^2 = \alpha$ and $\alpha\beta = 1$. Since $K$ is perfect, any orthogonalizable nondegenerate inner product admits an orthonormal basis. This implies that (up to isometry) there is only one nondegenerate orthogonalizable inner product. Therefore, we can 
consider the inner product $\esc{\cdot, \cdot}: K^2 \times K^2 \to K$ defined by $\esc{(x, y), (x', y')} = xx' + yy'$, for all $x, x', y, y' \in K$. Let us begin by computing the group $\mathcal{O}(K^2, \esc{\cdot,\cdot})$ of isometries of $(K^2, \esc{\cdot,\cdot})$. This can be easily done by identifying linear maps $K^2 \to K^2$ with their matrices with respect the canonical basis. In fact, a given linear map is an isometry if and only its matrix $M$ satisfies that $MM^t = M^2 = 1$. Proceeding in this way we obtain that
\[
\mathcal{O}(K^2, \esc{\cdot,\cdot}) = 
\left \{ 
\left(\begin{array}{@{}cc@{}}
1 & 0 
\\ 
0 & 1 
\end{array} \right), \, 
\left(\begin{array}{@{}cc@{}}
0 & 1 
\\ 
1 & 0 
\end{array} \right), \,
\left(\begin{array}{@{}cc@{}}
\alpha & \beta 
\\ 
\beta & \alpha
\end{array} \right), \, 
\left(\begin{array}{@{}cc@{}}
\beta & \alpha \\
\alpha & \beta
\end{array} \right)
\right \},
\]
which is isomorphic to the Klein group $\hbox{\bf F}_2 \times \hbox{\bf F}_2$. Next, notice that the nonzero isotropic vectors are 
\[
K^\times(1, 1) = \big \{(1, 1), (\alpha, \alpha), (\beta, \beta) \big \} \cong \hbox{\bf F}_3.
\]
Thus, there are three singletons orbits under the natural action of the group $\mathcal{O}(K^2, \esc{\cdot,\cdot})$ on the set $K^\times(1, 1)$, namely: $\{(1, 1)\}$, $\{(\alpha, \alpha)\}$ and $\{(\beta, \beta)\}$. 

We are now in a position to determine all the 3-dimensional evolution algebras $A$ such that 
$\dim(A^2) = 1$, $(A^2)^2 = 0$, $A^3 \neq 0$ and $\dim(\Ann(A)) = 1$. Let us take $A$ one of these evolution algebras, and write $A = \Ann(A) \oplus W$ by \eqref{decomA}. Then $\dim(W) = 2$, and we can identify $(W, \langle \cdot, \cdot \rangle)\vert_W$ with $K^2$ endowed with the inner product space having the identity matrix (with respect to the canonical basis). Lemma \ref{pointed} allows us to choose the generator of $A^2$ of the form $a = (1, \lambda, \mu)$, where $(\lambda, \mu) \in K^\times(1, 1)$. In total, there are three possibilities for $(\lambda, \mu)$, which induce non-isomorphic evolution algebras. Theorem \ref{Iso2} allows us to conclude that (up to isomorphism) there are three evolution algebras with products:
\begin{align*}
(x, y, z)(x', y',z') &= (yy' + zz')(1, 1, 1),
\\
(x, y, z)(x', y',z') &= (yy' + zz')(1, \alpha, \alpha),
\\
(x, y, z)(x', y', z') &= (yy' + zz')(1, \beta, \beta).
\end{align*} 
\end{example}

\begin{example} 
The $3$-dimensional evolution algebras $A$ with $\dim(A^2)=1$ such that $(A^2)^2=0$ but $A^3\ne 0$ over the field $K$ of nine elements are of two types: 
\begin{enumerate}
\item $\hbox{Ann}(A)=0$, then $A\cong K^3$ with product 
\begin{align*}
(x,y,z)(x',y',z') &=(xx'+yy'+zz')(0,1,i), \mbox{ or }
\\
(x,y,z)(x',y',z') &=(\omega xx'+yy'+zz')(0,1,i). 
\end{align*}
\item $\dim(\hbox{Ann}(A))=1$, then $A\cong K\times K^2$ with product 
\[
(x,y,z)(x',y',z')=(yy'+zz')(0,1,i).
\]
\end{enumerate}
Notice that in (2), inner products of the form 
\[
\langle (x, y, z), (x', y', z') \rangle = \omega yy' + zz',
\]
can not be considered since they do not have nonzero isotropic vectors. 
\end{example}


\begin{remark}
Let $K$ be a field of characteristic two, and $\mathcal{C}_{nd}$ the class of $n$-dimensional evolution algebras $A$ satisfying that $\dim(A^2) = 1$, $(A^2)^2 = 0$, $A^3 \neq 0$, and 
$d = \dim(\Ann(A))$. Notice that Example \ref{F4ejemplo2} shows us that the isomorphic clases of $\mathcal{C}_{nd}$ are in one to one correspondence with the orbits of the group $\mathcal{O}(K^{n-d}, \esc{\cdot,\cdot})$ (of isometries of $K^{n - d}$) on the set of isotropic vectors.  
\end{remark}

As expected, we can improve Theorem \ref{Iso2} by requiring the field to be quadratically closed. Before doing so, we need to prove a few results, the first one is a reformulation of the famous result known as {\it Witt's Cancelation Theorem}. 

\begin{proposition} \label{WCT2.0}
Let $V_1$ and $V_2$ be vectors spaces over the same field of characteristic not two, $q_i$ a nondegenerate quadratic form on $V_i$, and $U_i$ a nondegenerate subspace of $V_i$, for $i = 1, 2$. If the spaces $(V_1, q_1)$ and $(V_2, q_2)$ are isometric, and there is an isometry $U_1 \to U_2$, then  
there is an isometry $U_1^\bot \to U_2^\bot$.
\end{proposition}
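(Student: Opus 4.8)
The plan is to reduce the statement to the classical Witt Cancellation Theorem by splitting off radicals. The subtlety here is that $V_1$ and $V_2$ are not assumed finite dimensional and $U_1^\bot, U_2^\bot$ need not be nondegenerate — indeed $U_i^\bot$ will generically contain part of... wait, no: since $q_i$ is nondegenerate on $V_i$ and $U_i$ is a nondegenerate subspace, one has the orthogonal decomposition $V_i = U_i \oplus U_i^\bot$ with $U_i^\bot$ also nondegenerate. So in fact both $U_i^\bot$ are nondegenerate, and the situation is cleaner than it first appears. First I would record this orthogonal splitting: for a nondegenerate subspace $U$ of a nondegenerate space $(V,q)$, the bilinear form restricts to a nondegenerate pairing on $U$, hence $U \cap U^\bot = 0$, and a dimension/duality argument (valid also in infinite dimensions, using that $U$ is finite dimensional or, more carefully, that the restriction map $V \to U^*$ is surjective with kernel $U^\bot$) gives $V = U \oplus U^\bot$ as an orthogonal direct sum, with $U^\bot$ nondegenerate.

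Next I would invoke the hypothesis: there is an isometry $\varphi: (V_1,q_1) \to (V_2,q_2)$ and an isometry $\psi: (U_1, q_1|_{U_1}) \to (U_2, q_2|_{U_2})$. The idea is to compare the two decompositions $V_2 = U_2 \oplus U_2^\bot$ and $V_2 = \varphi(U_1) \oplus \varphi(U_1^\bot)$, the latter being an orthogonal decomposition since $\varphi$ is an isometry and $\varphi(U_1^\bot) = \varphi(U_1)^\bot$. Thus inside $(V_2, q_2)$ we have two nondegenerate subspaces $U_2$ and $\varphi(U_1)$ that are isometric to each other (via $\varphi \circ \psi^{-1}$, say, composed appropriately — concretely $\varphi|_{U_1}\circ\psi^{-1}: U_2 \to \varphi(U_1)$ is an isometry). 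Now apply Witt Cancellation inside $(V_2, q_2)$: since $U_2$ and $\varphi(U_1)$ are isometric nondegenerate subspaces of the same nondegenerate quadratic space, their orthogonal complements $U_2^\bot$ and $\varphi(U_1)^\bot = \varphi(U_1^\bot)$ are isometric. Finally, $\varphi$ restricts to an isometry $U_1^\bot \to \varphi(U_1^\bot)$, and composing with the isometry $\varphi(U_1^\bot) \to U_2^\bot$ just produced yields the desired isometry $U_1^\bot \to U_2^\bot$.

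The main obstacle is making sure the version of Witt Cancellation being used is legitimately available — the classical statement is for finite dimensional quadratic spaces, whereas here $V_i$ may be infinite dimensional. However, the cancellation is only ever applied \emph{relative to finite dimensional pieces}: one should observe that the isometry between $U_2$ and $\varphi(U_1)$ takes place in a finite dimensional subspace (namely any subspace of $V_2$ containing both $U_2$ and $\varphi(U_1)$ and nondegenerate — or simply $U_2 + \varphi(U_1)$ together with enough of its complement), and that Witt Cancellation can be bootstrapped to the infinite dimensional setting by restricting attention to the finite dimensional span of the relevant vectors and the characteristic $\ne 2$ hypothesis guaranteeing orthogonal diagonalizability there. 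If one prefers to avoid this, the cleanest route is to cite a known infinite dimensional form of Witt's theorem (valid over fields of characteristic not two); the reformulation in the statement is designed precisely so that all input spaces $U_i$ and $U_i^\bot$ are nondegenerate, which is exactly the hypothesis such theorems require.
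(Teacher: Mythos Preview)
Your argument is correct and follows essentially the same route as the paper: transport $U_1$ into $V_2$ via the global isometry, compare the two isometric nondegenerate subspaces $U_2$ and $\varphi(U_1)$ inside $(V_2,q_2)$, invoke Witt Cancellation to get an isometry between their orthogonal complements, and compose with $\varphi\vert_{U_1^\bot}$. The paper's proof is the same three-line reduction (with $f,g$ in place of your $\varphi,\psi$), omitting your preliminary discussion of the splitting $V=U\oplus U^\bot$ and the infinite-dimensional caveats; in the paper's applications the spaces are finite dimensional, so those concerns do not arise.
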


\begin{proof}
Suppose that $f: V_1 \to V_2$ and $g: U_1 \to U_2$ are isometries. Then the map $gf^{-1}: f(U_1)\to U_2$ is also an isometry. Witt's Cancellation Theorem tells us that there is an isometry between 
$f(U_1)^\bot = f(U_1^\bot)$ and $U_2^\bot$, say $h$. To finish, notice that the composition 
$h f\vert_{U_1^\bot}$ is the desired isometry.
\end{proof}

\begin{lemma} \label{Isometria}
Let $(W_1, q_1)$ and $(W_2, q_2)$ be nondegenerate isometric spaces over a field $K$ of characteristic not two. If $w_1 \in W_1$ and $w_2 \in W_2$ are nonzero
isotropic vectors, then there exists an isometry $f: W_1 \to W_2$ such that $f(w_1) = w_2$.
\end{lemma}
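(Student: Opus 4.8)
The plan is to realise each $w_i$ as one half of a hyperbolic pair and then peel off a hyperbolic plane by means of Witt cancellation, as packaged in Proposition \ref{WCT2.0}.

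First I would check the standard fact that in a nondegenerate quadratic space a nonzero isotropic vector always extends to a hyperbolic pair. Since $(W_i,q_i)$ is nondegenerate and $w_i\neq 0$, there is $v_i\in W_i$ with $\langle w_i,v_i\rangle\neq 0$, and after rescaling we may assume $\langle w_i,v_i\rangle=1$. As $\mathrm{char}(K)\neq 2$, put $u_i:=v_i-\tfrac12 q_i(v_i)\,w_i$; using $q_i(w_i)=0$ one computes $q_i(u_i)=0$ and $\langle w_i,u_i\rangle=1$ (so in particular $w_i,u_i$ are linearly independent). Hence $H_i:=Kw_i\oplus Ku_i$ is a hyperbolic plane, its Gram matrix in the basis $\{w_i,u_i\}$ being the standard hyperbolic one; in particular $H_i$ is a nondegenerate subspace of $W_i$, so $W_i=H_i\oplus H_i^\bot$ with $H_i^\bot$ also nondegenerate.

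Now the linear map $\varphi\colon H_1\to H_2$ determined by $\varphi(w_1)=w_2$ and $\varphi(u_1)=u_2$ is an isometry, since both spaces carry the hyperbolic form in these bases. By hypothesis $(W_1,q_1)$ and $(W_2,q_2)$ are isometric, and $H_1$, $H_2$ are nondegenerate subspaces that are isometric via $\varphi$; Proposition \ref{WCT2.0} therefore provides an isometry $g\colon H_1^\bot\to H_2^\bot$. Finally, define $f\colon W_1\to W_2$ by $f(h+t)=\varphi(h)+g(t)$ for $h\in H_1$ and $t\in H_1^\bot$; this is an isometry, as it respects the orthogonal decompositions and restricts to an isometry on each summand, and $f(w_1)=\varphi(w_1)=w_2$, as desired.

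The construction is essentially elementary; the one computational step is the adjustment producing the isotropic partner $u_i$, which is precisely where $\mathrm{char}(K)\neq 2$ enters, and the only point that must be watched is that the hyperbolic plane $H_i$, and hence $H_i^\bot$, is nondegenerate so that Proposition \ref{WCT2.0} applies (this holds in any dimension, since that proposition carries no finiteness hypothesis).
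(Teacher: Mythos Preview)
Your argument is correct and follows essentially the same route as the paper's proof: complete each isotropic $w_i$ to a hyperbolic pair, build the obvious isometry between the two hyperbolic planes, and then invoke Proposition~\ref{WCT2.0} (Witt cancellation) to match up the orthogonal complements and glue. The only differences are cosmetic: you spell out the standard construction of the isotropic partner $u_i=v_i-\tfrac12 q_i(v_i)w_i$ where the paper simply asserts existence of a hyperbolic pair, and the paper separates out the trivial low-dimensional cases explicitly while you absorb them into the general argument.
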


\begin{proof}
The result trivially holds in dimension 1. Suppose now that both $W_1$ and $W_2$ have dimension $\geq 2$. Write $\esc{\cdot,\cdot}_i$ to denote the polar form of $q_i$, for $i = 1, 2$. There exists $w'_i \in W_i$ such that $(w_i, w'_i)$ is a hyperbolic pair in $W_i$, for $i = 1, 2$. It is straightforward to check that the linear map $\xi: Kw_1\oplus Kw_1'\to Kw_2\oplus Kw_2'$ such
that $\xi(w_1) = w_2$ and $\xi(w_1') = w_2'$ is an isometry. If $\hbox{dim}(W_i) = 2$, then we are done. Otherwise, Proposition \ref{WCT2.0} gives 
an isometry $\eta: (Kw_1\oplus Kw_1')^\bot \to (Kw_2\oplus Kw_2')^\bot$. Now, $V_i = (Kw_i \oplus Kw_i')\oplus (Kw_i\oplus Kw_i')^\bot$, for $i = 1, 2$, and we can easily construct an isometry $f: W_1 \to W_2$ such that $f(w_1) = w_2$, $f(w'_1) = w'_2$ and $f\vert_{(Kw_1\oplus Kw_1')^\bot} = \eta$, finishing the proof.
\end{proof}

\begin{theorem} \label{A3not0}
Let $K$ be a quadratically closed field of characteristic not two. 
The isomorphic class of an evolution algebra $A$ with $\dim(A^2) = 1$, $(A^2)^2 = 0$ and $A^3 \neq 0$ is completely determined by $\dim (A)$ and $\dim \big(\Ann(A)\big)$. 
\end{theorem}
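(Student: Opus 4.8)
The plan is to deduce this from Theorem \ref{Iso2}, combined with two geometric facts available over a quadratically closed field of characteristic not two: nondegenerate inner product spaces are classified by dimension (Remark \ref{new}), and the isometry group of such a space acts transitively on its nonzero isotropic vectors (Lemma \ref{Isometria}).

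First I would fix two evolution algebras $A$ and $B$ in the class of the statement with $\dim(A) = \dim(B)$ and $\dim(\Ann(A)) = \dim(\Ann(B))$, and write $A = \Ann(A) \oplus W_A$ and $B = \Ann(B) \oplus W_B$ as in \eqref{decomA}. Pick generators $a$ of $A^2$ and $b$ of $B^2$ and decompose $a = x + w$, $b = x' + w'$ with $x \in \Ann(A)$, $w \in W_A$, $x' \in \Ann(B)$, $w' \in W_B$. Since $A^3 \neq 0$ forces $a \notin \Ann(A)$, we must have $w \neq 0$; by \eqref{isotropico} the vector $w$ is isotropic, and the same argument shows $w' \neq 0$ is isotropic.

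Next, from $\dim(A) = \dim(B)$ and $\dim(\Ann(A)) = \dim(\Ann(B))$ the decomposition \eqref{decomA} gives $\dim(W_A) = \dim(W_B)$. By Lemma \ref{lemmaanndecomp} the spaces $(W_A, \langle \cdot, \cdot \rangle|_{W_A})$ and $(W_B, \langle \cdot, \cdot \rangle|_{W_B})$ are nondegenerate and, as $K$ is quadratically closed, admit orthonormal bases; having the same dimension, they are therefore isometric by Remark \ref{new}. Now Lemma \ref{Isometria} applies (here the hypothesis $\mathrm{char}(K) \neq 2$ is used): since $W_A$ and $W_B$ are nondegenerate, isometric, and $w, w'$ are nonzero isotropic vectors, there is an isometry $\theta : W_A \to W_B$ with $\theta(w) = w'$. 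Theorem \ref{Iso2} then yields $A \cong B$. Conversely, if $A \cong B$, Theorem \ref{Iso2} gives $\dim(\Ann(A)) = \dim(\Ann(B))$ together with an isometry $W_A \to W_B$, so $\dim(W_A) = \dim(W_B)$, and adding dimensions through \eqref{decomA} gives $\dim(A) = \dim(B)$.

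The only substantive ingredient is Lemma \ref{Isometria}, the transitivity of the isometry group on nonzero isotropic vectors, which itself rests on Witt's Cancellation Theorem via Proposition \ref{WCT2.0}; the rest is bookkeeping with \eqref{decomA}. The one point that genuinely needs care is verifying that $w$ and $w'$ are nonzero before invoking Lemma \ref{Isometria}, and this is precisely where the hypothesis $A^3 \neq 0$ enters the argument.
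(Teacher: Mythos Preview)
Your proof is correct and follows essentially the same route as the paper: decompose via \eqref{decomA}, use Lemma \ref{lemmaanndecomp} (together with quadratic closedness) to get that $W_A$ and $W_B$ are isometric, then invoke Lemma \ref{Isometria} and Theorem \ref{Iso2}. If anything, you are slightly more careful than the paper in explicitly checking that $w$ and $w'$ are nonzero before applying Lemma \ref{Isometria}; the paper leaves this implicit.
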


\begin{proof}
Let $B$ be an evolution algebra such that $\dim(B^2) = 1$, $(B^2)^2 = 0$ and $B^3 \neq 0$. 
We express $A$ and $B$ as in \eqref{decomA}:
\begin{equation} \label{ABDec}
A = \Ann(A) \oplus W_A, \quad 
B = \Ann(B) \oplus W_B.
\end{equation}
We write $a = x + w$, $b = x' + w'$, where $x \in \Ann(A)$, $x' \in \Ann(B)$, and $w \in W_A$, $w' \in W_B$, and $a$ (respectively, $b$) generates $A^2$ (respectively, $B^2$). Notice that $w$ and $w'$ are both isotropic by \eqref{isotropico}.

Suppose first that $\dim(A) = \dim(B)$ and $\dim(\Ann(A)) = \dim(\Ann(B))$. Then $\dim(W_A) = \dim(W_B)$,   and Lemma \ref{lemmaanndecomp} applies to get that the nondegenerate spaces $W_A$ and $W_B$ are isometric. Lemma \ref{Isometria} and Theorem \ref{Iso2} tell us that $A$ and $B$ are isomorphic. 

The converse follows from \eqref{ABDec} and Theorem \ref{Iso2}.
%
\end{proof}

\end{document}